	\theoremstyle{plain}
		\newtheorem{mainthm}{\textsc{Theorem}}		
				\newtheorem{thm}{Theorem}[section]
				\newtheorem{lem}[thm]{Lemma}
	\theoremstyle{definition}
		\newtheorem{defn}[thm]{Definition}	
					\theoremstyle{remark}
		\newtheorem{rem}[thm]{Remark}		
				\newtheorem{note}[thm]{Notation}		
				\numberwithin{equation}{section}
\newcommand{\trasp}[1]{{#1}^\mathsf{T}}	
\newcommand{\itrasp}[1]{{#1}^\mathsf{-T}}	
\newcommand{\R}{\mathbb{R}}		
\newcommand{\ZZ}{\mathbf{Z}}
\newcommand{\U}{\mathbf{U}}		
\newcommand{\igeo}{\iota^{\scriptscriptstyle{\mathrm{geo}}}}
\DeclareMathOperator{\sgn}{sgn}		
\newcommand{\Id}{I}
\def\sqm1{\sqrt{-1}}
\def\={\cong}
\def\>{\supset}
\def\<{\subset}
\def\12{\dfrac{1}{2}}
\def\0{^{\circ}}
\def\RR{{\mathbb R}}
\def\ZZ{{\mathbb Z}}
\def\rr{{\mathbb R}}
\def\Aa{{\mathcal A}}
\def\A{\Aa}
\def\R{\RR}
\def\Z{\ZZ}
\DeclareMathOperator{\morse}{m^-}
\DeclareMathOperator{\comorse}{m^+}
\DeclareMathOperator{\diag}{diag}
 \DeclareMathOperator{\Lag}{\Lambda}
\DeclareMathOperator{\Sp}{Sp}
\newcommand{\ga}{\gamma}
\newcommand{\gm}{\gamma}
\newcommand{\lmd}{\lambda}
\newcommand{\zt}{\zeta}
\newcommand{\vep}{\varepsilon}
\newcommand{\vr}{\varrho}
\newcommand{\xd}{\dot{x}}
\newcommand{\bh}{\widehat B }
\newcommand{\X}{\mathcal{X}}
\newcommand{\Xh}{\widehat{\mathcal{X}}}
\newcommand{\E}{\mathcal{E}}
\newcommand{\I}{\mathcal{I}}
\newcommand{\Mh}{\mathbf{M}}
\newcommand{\ld}{L_D}
\renewcommand{\ln}{L_N}
\title{Morse Index for Homothetic motions  in the gravitational $n$-body problem}
\author{Yuwei Ou\thanks{The author is supported by NSFC(No.12371192), the Young Taishan Scholars Program of Shandong Province(Grant No. tsqn202312055) and the Qilu Young
Scholar Program of Shandong University.}, Alessandro Portaluri
\thanks{The author is partially supported by INDAM-PRIN Project: N. 2022FPZEES
“Stability in Hamiltonian dynamics and beyond”.} }
\date{\today}
\begin{document}
 \maketitle

\begin{abstract}
In the gravitation $n$-body Problem, a homothetic orbit is a special solution of the Newton's Equations of motion, in which each body moves along a straight line through the center of mass and forming at any time  a central configuration. In 2020, Portaluri et al. proved that under a spectral gap condition on the limiting central configuration, known in literature as non-spiraling or [BS]-condition, the Morse index of an asymptotic colliding motion is finite.  Later Ou et al. proved  this result for other classes of unbounded motions, e.g. doubly asymptotic motions (e.g. doubly homothetic motions).

In this paper we prove that for a homothetic motion, irrespective of how large the index of the  limiting central configuration and how  large  the energy level is,  the following alternative holds: if the non-spiraling condition holds then the Morse index is 0 otherwise it is infinite. 
\vskip0.2truecm
\noindent
\textbf{AMS Subject Classification:} 70F16, 70F10, 53D12.
\vskip0.1truecm
\noindent
\textbf{Keywords:} Celestial mechanics, $n$-body problem, Homothetic orbits, Morse index, Maslov index.
\end{abstract}

%


\section{Introduction and main results} \label{sec: intro}

The gravitational $n$-body problem describes the motion of $n \ge 1$ point mass particles $q_1, \ldots, q_n \in \R^d$ (here $d \ge 1$) having  masses  $m_i>0$ for $i=2, \ldots, n$ and moving under the self-interacting gravitation potential. The corresponding Newton's Equations of motion, can be written as follows:
\begin{equation}\label{eq:NewtonINTRO}
M\,\ddot q = \nabla U(q),
\end{equation}
where $M$ is the block diagonal matrix defined as $M:=[M_{ij}]_{i,j=1}^n$ with $M_{ij}:= m_j \delta_{ij} \Id_d$, where $\Id_d$ is the $d$-dimensional identity matrix, whilst  the {\sc potential function $U$} is given by
\[
U(q)= \sum_{\substack{i, j=1\\[0.3pt] i< j}}^n\dfrac{m_i \, m_j}{|q_i-q_j|}
\]
and finally $\nabla$ denotes the Euclidean gradient.
Since the center of mass  is a conserved quantity, wlog the {\sc configuration space} of the system is the $N=d(n-1)$-dimensional subspace defined by
\[
\X:=\Set{ q \in \rr^{dn}| \sum_{i=1}^n m_i q_i =0 }.
\]
For each pair of indices $i, j \in \Set{1, \ldots, n}$,  we let $\Delta_{i,j}=\Set{q:=(q_1, \ldots, q_n) \in (\R^d)^n| q_i=q_j}$ be the {\sc collision set of the $i$-th and $j$-th particles} and we let
\[
\Delta:= \bigcup_{\substack{i,j=1\\ i \neq j}}^n \Delta_{i, j}
\]
be the {\sc collision set in $\X$.} It turns out that $\Delta$ is a {\sc cone} with vertex is located at the origin $o \in \Delta$ which corresponds to the {\sc total collision} or {\sc total collapse} of the system (since the center of mass has been fixed at the origin). The space of {\sc collision free configurations} is denoted by
\[
 \Xh:=\X\setminus \Delta.
\]
Denoting by $T\X$ the tangent bundle of $\X$, whose elements are denoted by $(q,v)$ with $q\in X$ and $v$ a tangent vector at $q$, the Lagrangian function $L : T\X \to [0, +\infty)\cup\{+\infty\}$ of the system is
\begin{equation}\label{eq:lagrangianaINTRO}
 L(q,v)=K(v)+ U(q), \quad \textrm{ where } \quad K(v):=\dfrac{1}{2}|v|_M^2 :=  \langle  Mv, v \rangle
\end{equation}
the term $K$ is the {\sc kinetic energy} of the system. It is well-known (cfr. \cite{AZ94} and references therein) that the   genuine solutions of the Newton's Equation are up to standard bootstrap arguments critical points of the $\mathscr C^2$-regular  {\sc  Lagrangian action functional}
\begin{equation}
\mathcal A: W^{1,2}([t_1, t_2],  \Xh)\to \R \quad \textrm{ defined by } \quad \mathcal A(q):=\int_{t_1}^{t_2} L(q(t), \dot{q}(t)) dt.
\end{equation}
It is well-known that homothetic motions as well as other classes of motions like the class of total parabolic and hyperbolic motions not  experiencing any collision or non-collision singularity both in the future or the past, are generated by the action of $\R$ or $\R^+$ by the {\sc central configurations}, (cfr.  \cite{BHPT20, HOY21} and references therein), namely critical points of the restriction of the potential $U$ to the inertia ellipsoid. We are entitled to introduce the following definitions.
\begin{defn}\label{central confi}
let $\I(q):= \langle Mq, q \rangle $ be denote the {\sc moment of inertia} of the system and let $\E:= \{q \in \X: \I(q)=1 \}$ be the set of {\sc normalized configurations}.\footnote{We observe that the set $\E$ is the unit sphere in the mass norm $|\cdot|_M$ and it is usually termed {\sc inertia ellipsoid}.}
 We term {\bf central configuration} a critical point of the restriction   of $U|_{\E}$.
\end{defn}
In \cite{HOY21}, the authors consider the so-called {\sc doubly asymptotic motion}, whose  simplest example is represented by a  homothetic solution produced by the central configuration.
\begin{defn}
A {\bf homothetic motion} is a solution $q$   of the Newton's Equations of motion given at Equation~\eqref{eq:NewtonINTRO}, pointwise  defined  by $q(t)=r(t)s_0$ where $s_0$ is a central configuration and $t \mapsto r(t)$ is a positive real smooth function.\footnote{From a physical viewpoint the function $r$ is responsible of a dilation or a shrink of the  configuration without neither changing its shape nor rotating. Moreover it is easy to check that $r$ is a solution of the 1D Schr\"odinger's equation under a gravitational central force field.}
\end{defn}
For $T^{\pm} \in \rr$, let us consider a {\sc doubly  homothetic solution} $q$, namely a homothetic solution of the Equation~\eqref{eq:NewtonINTRO} such that $\displaystyle \lim_{t \to T^{\pm}} q(t)= 0$. Given $[t_1, t_2] \subset (T^-,T^+)$, the restriction  $q|_{[t_1, t_2]}$ is a not colliding solution of the Newton's equations. In particular $q|_{[t_1, t_2]}$ is a  critical point of the action functional $\A$ in $W_0^{1,2}([t_1, t_2], \Xh)$. In particular, such a restriction has a well-defined {\sc Morse index},  denoted by $\morse (q; t_1, t_2)$ and defined as the  maximal dimension of the negative spectral space of the second variation (quadratic form) of the Lagrangian action functional on $W_0^{1,2}([t_1, t_2], \Xh)$. Following authors in \cite{HOY21}, we introduce the   following definition.
\begin{defn}\label{dfn: Morse index}
Let $\{t^{\pm}_k\}_{k \in \Z^+}$ be two sequences of instants satisfying $ T^- < t^-_k< t^+_k < T^+$ and such that $\displaystyle \lim_{k \to \infty} t^{\pm}_k = T^{\pm}$. We define the {\bf Morse index} of the
\begin{itemize}
	\item {\bf Doubly  homothetic solution} $q \in \mathscr C^2((T^-, T^+), \Xh)$ as follows
\begin{equation}\label{dfn: morse index}
\morse (q; T^-, T^+)=\lim_{k \to \infty}\morse (q; t^-_k, t^+_k).
\end{equation}
\item {\bf Homothetic solution} $q \in \mathscr C^2([0, T^+), \Xh)$ starting from the  central configuration $s_0$ as
\begin{equation} \label{dfn: 2morse index}
\morse (q; 0, T^+)=\lim_{k \to \infty}\morse (q; 0, t^+_k).
\end{equation}
\end{itemize}
\end{defn}
 \begin{rem}
 It is worth observing that as a direct consequence of the  following monotonicity property  of the Morse index (see \cite{CH53} or \cite{HWY})
\begin{equation*}
\morse (q; t_1,t_2)\le \morse (q; t^*_1,t^*_2), \qquad  \textrm{ for } \qquad   t^*_1\le t_1  \textrm{ and }    t_2\le t^*_2
\end{equation*}
it holds that the Morse indices appearing at Definition~\ref{dfn: Morse index}
\[
\morse (q; T^-, T^+) \qquad \textrm{ and } \qquad  \morse (q; 0, T^+)
\]
 exist and are well-defined, meaning that they are independent on the choice of the sequences $\{t^{\pm}_k\}$.
\end{rem}
Given an arbitrary configuration $q \in \X$, we introduce the polar coordinates $(r, s) \in [0, +\infty) \times \E $ as follows
\[
r= \sqrt{\mathcal{I}(q)} \qquad  s= \dfrac{q}{r}=(s_1 \ldots, s_n) \in \E.
\]
\begin{rem}
It is worth observing that, in general, the angular part $s$ of a total colliding solution doesn't converge to a fixed central configuration. However in the case  of a doubly  homothetic motion of the form
$q(t)=r(t)s_{0}$, we get that  $s(t)=s_{0}$ for every $ t\in(T^{-}, T^{+})$. In particular  the limit of the angular part exists. \footnote{In the case of   singular homothetic motions, this is a special case of the class of $s_0$-asymptotic motions considered in \cite{BHPT20}.}
\end{rem}
The next step is to introduce a spectral gap condition on  the limiting central configuration $s_0$ that will play a crucial role. First of all, we observe that by a straightforward computation a critical point $s_0$ of the restriction of the potential function on the  set of normalized configurations, can be characterized as a solution of the following algebraic equation
\begin{equation}\label{eq: CC}
\nabla U|_{\E}(s)= \nabla U(s) + U(s)Ms.
\end{equation}
By linearizing the Equation~\eqref{eq: CC} at a central configuration $s_0$,  we get that the Hessian of $U$ restricted on $\E$ at $s_0$ with respect  to the Euclidean product of the ambient space is given by
\begin{equation} \label{eq: Hessian U}
 D^2U|_{\mathcal{E}}(s_0)=D^2U(s_0)+U(s_0)\, M
\end{equation}
whilst wrt the mass inner product,  we get that the Hessian of $U$ at the central configuration $s_0$ can be re-written as follows
\begin{equation}\label{eq: normalized Hessian}
M^{-1}D^2U|_{\mathcal{E}}(s_0)=M^{-1}D^2U(s_0)+U(s_0)\,\Id.
\end{equation}
\begin{defn} \label{def: BS08 condition}
Let $s_0 \in \E$ be a central configuration.\\
 We denote by $\mathfrak s(s_0)=\Set{\lambda_j(s_0)| j=1, \ldots, n^*-1}$ the spectrum of $M^{-1}D^2U|_{\mathcal{E}}(s_0)$ and we assume that
\[
\lmd_1(s_0) \le \lmd_2(s_0) \le \cdots \le \lmd_{n^*-1}(s_0).
\]
We say that  $s_0$ satisfies the condition
\begin{enumerate}
\item {\bf Spiral} if $\lmd_1(s_0)<-\dfrac{1}{8}U(s_0)$
\item {\bf  Non-spiral}  if $\lmd_1(s_0) \ge -\dfrac{1}{8}U(s_0)$
\item {\bf Strict non-spiral} or {\bf [BS]}  if $\lmd_1(s_0) > -\dfrac{1}{8} U(s_0)$.
\end{enumerate}
\end{defn}
\begin{rem}
The spiral condition firstly appeared in the study of the isosceles three body problemby Devaney \cite{Dev80} and Moeckel \cite{Moe81}. The strict non-spiral condition in the variational setting was introduce by authors  in \cite{BS08}. In this last reference they  proved that under the spiral condition, the Morse index of a $s_0$-asymptotic colliding trajectory is infinite. We refer the interested reader to  \cite{BHPT20} where a complete index theory has been constructed  for $s_0$-asymptotic colliding motions.

By starting from the results proved in \cite{BHPT20}, authors in  \cite{HOY21}
constructed a new index theory by giving a complete description of the Morse index in the case of  a doubly asymptotic solutions that are $\mathscr C^2$ solutions of the Newton's equation of motion having convergent angular part, meaning that the following limits $\lim_{t\rightarrow T^{\pm}} s(t) = s_{\pm}$ exist. For this class of solutions, they proved that  for a doubly asymptotic homothetic solution $\phi$, the following alternative occurs:
\begin{itemize}
	\item If the {\sc non-spiral condition} holds and the 
	\begin{itemize} 
	\item {\sc  Energy is negative}, then the Morse index of $\phi$ is  equal to the Morse index of the corresponding normalized central configuration $s_0$
	\item {\sc Energy is non-negative}, then  the Morse index of $\phi$ is zero
	\end{itemize}
	\item If the {\sc spiral condition} holds, then the  Morse index  of $\phi$ is infinite.
\end{itemize}
\end{rem}
The aim of this paper is to investigate a special class of motions provided by the $s_0$-colliding  homothetic motions. A special property for these class of motions is that  their angular part doesn't depend on the time $t$ and it is constantly equal to $s_0$.

In this paper we prove  that in the non-spiral case, irrespective of how large the index of the limiting central configuration $s_0$  and how large the energy level of the orbit is, we get that a $s_0$-colliding homothetic motion is always a Morse minimizer in the sense that has a vanishing   Morse index. We observe also  that, for a $s_0$-asymptotic motion and in the equal case for the non-spiral condition, the second variation is not anymore a Fredholm quadratic form even if  the Morse index given at Definition~\ref{dfn: Morse index}, is still finite. The main result of this paper reads as follows. 
\begin{mainthm}\label{thm:2homothetics}
Let $s_0 \in \E$ be a central configuration and let $q$ be a  colliding $s_0$-homothetic $\mathscr C^2$-solution $q \in \mathscr C^2([0, T^+), \Xh)$. If $s_0$  satisfies the
\begin{enumerate}
\item[(a)] {\sc Spiral condition}, then $\morse (q; 0, T^+)= +\infty$
\item[(b)] {\sc Non-spiral condition}, then $\morse (q; 0, T^+)= 0$.
\end{enumerate}
\end{mainthm}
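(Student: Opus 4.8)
The plan is to pass to polar (McGehee–type) coordinates, in which the second variation of $\mathcal A$ along the homothetic orbit decouples, and then to read off the Morse index from a finite family of scalar Sturm–Liouville forms whose behaviour at the collision instant $T^+$ is dictated by the spectral gap of Definition~\ref{def: BS08 condition}. First I would write $q=rs$ with $r=\sqrt{\mathcal{I}(q)}$ and $s\in\mathcal E$; since $\langle Ms,\dot s\rangle=0$ on $\mathcal E$ and $U$ is homogeneous of degree $-1$, the Lagrangian becomes $L=\tfrac12(\dot r^2+r^2|\dot s|_M^2)+r^{-1}U(s)$ and the homothetic orbit is $(r(t),s_0)$ with $\ddot r=-U(s_0)r^{-2}$. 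Linearising at $(r(t),s_0)$, a variation is a pair $(\rho,\xi)$ with $\xi(t)\in T_{s_0}\mathcal E$ (the $M$-orthogonal complement of $s_0$); using the central configuration equation~\eqref{eq: CC}, so that $\nabla U(s_0)=-U(s_0)Ms_0$ is $M$-normal to $\mathcal E$, the would-be coupling term is proportional to $\langle Ms_0,\xi\rangle=0$, and on $W_0^{1,2}$ the index form splits as $Q=Q_{\mathrm{rad}}\oplus Q_{\mathrm{ang}}$,
\[
Q_{\mathrm{rad}}(\rho)=\int\Big(\tfrac12\dot\rho^2+U(s_0)r^{-3}\rho^2\Big)dt,\qquad Q_{\mathrm{ang}}(\xi)=\int\Big(\tfrac12 r^2|\dot\xi|_M^2+\tfrac12 r^{-1}\langle D^2U|_{\mathcal E}(s_0)\xi,\xi\rangle\Big)dt,
\]
the curvature of $\mathcal E$ being exactly what turns $D^2U(s_0)$ into $D^2U|_{\mathcal E}(s_0)$ as in~\eqref{eq: Hessian U}. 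Since $U(s_0)>0$, $Q_{\mathrm{rad}}$ is positive definite and never contributes. Diagonalising $M^{-1}D^2U|_{\mathcal E}(s_0)$ $M$-orthogonally with eigenvalues $\lambda_1\le\cdots\le\lambda_{n^*-1}$, the form $Q_{\mathrm{ang}}$ becomes an orthogonal sum of scalar forms $Q^{(j)}(x)=\tfrac12\int(r^2\dot x^2+\lambda_j r^{-1}x^2)\,dt$, so that $\morse (q;0,T^+)=\sum_j\sup_k\morse (Q^{(j)};0,t^+_k)$.

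Next I would analyse each $Q^{(j)}$. If $\lambda_j\ge0$ then $Q^{(j)}\ge0$, so everything is concentrated at $T^+$, where $r(t)=\kappa(T^+-t)^{2/3}(1+o(1))$ with $\kappa^3=\tfrac92 U(s_0)$. The time change $ds=r^{-3/2}dt$ (so $t\to T^+$ becomes $s\to+\infty$) together with $x=r^{-1/4}y$ turns $Q^{(j)}$, exactly and with no boundary term, into $\tfrac12\int(y'^2+V(s)y^2)ds$ with $V(s)=\tfrac1{16}(r'/r)^2+\tfrac14(r'/r)'+\lambda_j$; using $(r'/r)^2=2U(s_0)+2hr$ and $(r'/r)'=hr$ (with $h$ the energy) this is $V(s)=\tfrac18U(s_0)+\lambda_j+\tfrac38 h\,r(s)$, so the potential at infinity is $\tfrac18U(s_0)+\lambda_j$, whose sign is precisely the alternative of Definition~\ref{def: BS08 condition}.

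For part (a), the \textsc{spiral} condition gives $\tfrac18U(s_0)+\lambda_1<0$, so for $s_k\to+\infty$ the Dirichlet operator $-d^2/ds^2+V$ on $[\,\cdot\,,s_k]$ has $\sim s_k\sqrt{|\tfrac18U(s_0)+\lambda_1|}/\pi$ negative directions; equivalently the indicial exponents $\beta_\pm=\tfrac14(-1\pm\sqrt{1+8\lambda_1/U(s_0)})$ of $-(r^2\dot x)^\cdot+\lambda_1 r^{-1}x=0$ are non-real and Jacobi fields oscillate like $r^{-1/4}\cos(\tfrac{\nu}{4}\log r+\varphi)$, with infinitely many zeros accumulating at $T^+$. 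Exhibiting disjointly supported sine-bumps near $T^+$ on which $Q^{(1)}<0$ produces, for every $N$, an $N$-dimensional negative subspace once $t^+_k$ is close enough to $T^+$, hence $\morse (q;0,T^+)=+\infty$.

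For part (b), the \textsc{non-spiral} condition forces $\tfrac18U(s_0)+\lambda_j\ge0$ for all $j$, and it suffices to prove each $-(r^2\dot x)^\cdot+\lambda_j r^{-1}x$ disconjugate on $(0,T^+)$; only $-\tfrac18U(s_0)\le\lambda_j<0$ needs work, and then $\beta_\pm\in(-\tfrac12,0)$ are real. When $h\ge0$ one checks that $f=r^{\beta_+}$ satisfies $-(r^2\dot f)^\cdot+\lambda_j r^{-1}f=-2h\,\beta_+(\beta_++1)r^{\beta_+}\ge0$ (as $\beta_+(\beta_++1)<0$), so $f$ is a positive supersolution and $x=fy$ gives $Q^{(j)}(x)=\tfrac12\int r^2f^2\dot y^2 dt+\tfrac12\int f(-(r^2\dot f)^\cdot+\lambda_j r^{-1}f)y^2 dt\ge0$. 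The hard part is the case $h<0$, where $r^{\beta_+}$ is only a subsolution: here I would use the scaling symmetry $q(t)\mapsto\mu q(\mu^{-3/2}t)$ (which multiplies $\mathcal A$ and $Q$ by $\mu^{1/2}$ and acts transitively on negative energies) to normalise $h=-U(s_0)$, so that $r(s)=\operatorname{sech}^2(\omega(s-s_*))$ with $\omega^2=\tfrac12U(s_0)$ and $s_*$ the instant of maximal $r$, whence $V(s)=\tfrac18U(s_0)+\lambda_j-\tfrac38U(s_0)\operatorname{sech}^2(\omega(s-s_*))$ is a shifted Pöschl–Teller potential with well parameter $\nu=\tfrac12$ (since $\tfrac12U(s_0)\,\nu(\nu+1)=\tfrac38U(s_0)$). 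Because $\nu<1$ this well carries exactly one bound state, which is \emph{even} (proportional to $\operatorname{sech}^{1/2}$, at energy $\lambda_j$) and no odd one; since the orbit starts at $s_0$ and collapses, the initial instant $t=0$ lies at or after $s_*$, so imposing the Dirichlet condition there removes precisely that bound state and leaves $-d^2/ds^2+V\ge\tfrac18U(s_0)+\lambda_j\ge0$, hence $Q^{(j)}\ge0$. Together with $Q_{\mathrm{rad}}>0$ this yields $\morse (q;0,t^+_k)=0$ for all $k$, so $\morse (q;0,T^+)=0$. The genuinely delicate point is thus the $h<0$ subcase of (b): the vanishing of the index is not robust but comes from the exact Pöschl–Teller profile of the radial motion together with the location of the starting instant relative to the radial turning point, which is precisely where the independence of the size of the energy is earned.
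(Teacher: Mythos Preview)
Your proposal is correct and reaches the same scalar problem as the paper --- after the polar/McGehee reduction both you and the paper must analyse $c''=\bigl[\tfrac{3hr}{8}+\tfrac{b}{8}+\lambda_j\bigr]c$ with $c(0)=0$ --- but the route and the handling of the delicate $h<0$ case are genuinely different. The paper goes through the full Hamiltonian/Maslov apparatus (symplectic McGehee coordinates, the Morse--Maslov Theorem~\ref{thm:Mor-Mas}, symplectic additivity, crossing forms) and disposes of $h<0$ in Lemma~\ref{prop:Neu} by a first-order trick: the companion entry $a(\tau)$ of the fundamental solution obeys $a''=\bigl[-\tfrac{rh}{8}+\tfrac{b}{8}+\lambda\bigr]a$ with strictly positive coefficient, hence $a>0$, and the coupling $c'=a+\tfrac{v}{4}c$ then forbids $c$ from vanishing again. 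The equality case of the non-spiral condition is handled by a separate monotonicity/perturbation argument, and the spiral case by citing \cite{HOY21}. Your argument is more elementary and more self-contained: you bypass the Maslov machinery entirely, read off both the spiral case and the borderline non-spiral case directly from $V(\infty)=\tfrac{b}{8}+\lambda_j$, and for $h<0$ you normalise the energy so that $V(s)=\tfrac{b}{8}+\lambda_j-\tfrac{3b}{8}\operatorname{sech}^2\!\bigl(\omega(s-s_*)\bigr)$ is a shifted P\"oschl--Teller well with $\nu=\tfrac12$, whose single bound state (at energy $\lambda_j$, eigenfunction $\operatorname{sech}^{1/2}$) is even and is therefore killed by a Dirichlet condition at or to the right of the centre; domain monotonicity then gives $-d^2/ds^2+V\ge \tfrac{b}{8}+\lambda_j\ge 0$. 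What you lose is the connection to the general index-theoretic framework of \cite{BHPT20,HOY21}; what you gain is that the spectral gap $\lambda_j+\tfrac18 U(s_0)$ appears transparently as the bottom of the half-line Dirichlet spectrum, with no perturbation step needed.

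One shared subtlety worth flagging: your justification ``since the orbit starts at $s_0$ and collapses, the initial instant $t=0$ lies at or after $s_*$'' is not a consequence of collapse alone --- an $h<0$ orbit with $\dot r(0)>0$ rises before it falls, placing $s_*>0$ --- but the paper makes exactly the same standing hypothesis $\dot r\le 0$ (see the remark after Lemma~\ref{lem:masy1} and its use in the negative-energy branch of the proof of Lemma~\ref{prop:Neu}), so this is not a discrepancy between your argument and the paper's.
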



\subsection*{Notation}.
\small{
The following notation will be used without further comments throughout the paper.
\begin{itemize}
\item  For any positive integer $k$, we denote by $(\R^{2k}, \omega )$ the standard $(2k)$-dimensional symplectic space
\item $\Id$ denotes the identity matrix and $J= \left( \begin{array}{cc}
0 & -\Id \\ \Id & 0
\end{array} \right)$
the standard complex structure
\item Given  $f \in \mathscr C^2(\R^k, \R)$, $\nabla f$ will denote the usual  Euclidean gradient  whilst $D^2 f$ the Euclidean Hessian of $f$
\item  We denote by  $\ld $ (resp. $\ln$) the {\bf Dirichlet} (resp. {\bf Neumann}) Lagrangian subspace of the symplectic space $(\R^{2k}, \omega)$
\item For a given  finite set $A$, we denote by  $\# A$ its cardinality
\end{itemize}
}

The paper is organized as follows:
\tableofcontents


\section{The intersection  index of a homothetic motion}
\label{sec:McGehee-coordinates}

The aim of this section is to associate to an $s_0$-homothetic motion a  {\bf geometrical index} defined as an intersection index between a path of Lagrangian subspaces and a cooriented subvariety of the Lagrangian Grassmannian manifold. It turns out that this geometrical index coincides up to a constant (depending on the dimension of the configuration space) with the Morse index of the motion. This equality is a direct consequence of a {\sc Morse-type index theorem.}

A key idea for constructing a suitable index theory in this framework is to equip  the phase space by the {\sc McGehee coordinates}. However the  main difficulty to face with is that  this coordinates transformation is not canonical and so the Hamiltonian structure of the problem has not preserved. In Subsection~\ref{subsec: McGehee coordinates}, we construct an Hamiltonian version of these coordinates. Our basic reference is \cite{HOY21} and references therein.


\subsection{McGehee coordinates.}\label{subsec: McGehee coordinates}

Let $q \in \mathscr C^2([0, T^+), \Xh)$ be an $s^+$-homothetic colliding solution having  energy $h_0$; in particular
\begin{equation} \label{eq: lim s}
\lim_{t \to T^{+}} s(t)= \lim_{t \to T^{+}} q(t)/r(t) : = s^{+} \qquad \textrm{ and }\qquad \lim_{t\rightarrow T^{+}}q(t)=0.
\end{equation}

Let us now introduce on the $n^*:=d(n-1)$-dimensional inertia ellipsoid $\E$ a smooth coordinate chart $(\Omega, \psi)$ defined by
\[
\psi: \Omega \ni s \longmapsto x \in \psi(\Omega) \subset \rr^{n^*-1}
\]
and we set $\U(x):= U(\psi^{-1}(x))$ and
\[
\Mh (x):= \left(\dfrac{\partial \psi^{-1}}{\partial x} \right)^T M
\left(\dfrac{\partial \psi^{-1}}{\partial x} \right).
\]
So, in this new variables, by setting    $x^{+} :=\psi(s^{+})$, then we get
\begin{equation}
\lim_{t \to T^{+}} x(t) = \lim_{t \to T^{+}} \psi(s(t)) =x^{+}
\qquad \textrm{ and } \qquad
  \Mh _{+} := \lim_{t \to T^{+}}  \Mh (x(t)) =  \Mh (x^{+}).
\end{equation}

\begin{rem}
It is worth observing that the matrix $\Mh(x)$ is not anymore constant being, in fact,  dependent on  $x$. However, it is straightforward to check that  $\Mh(x)$ is (block) symmetric and in fact the following equality holds:
\[
\trasp{\Mh}(x)= \Mh(x) \qquad \forall\, x \in  \R^{n^*-1}.
\]
\end{rem}
\begin{note}
In shorthand notation we drop the dependence of the variable $x$ in the matrix $\Mh$.
\end{note}
Denoting by  $\Sigma:=(0, \infty) \times \R^{n^*-1}$ the open cylinder, in these new
coordinates, the  Lagrangian function $L: T\Sigma \to \R$ is given by
\[
L(r,x,\dot{r},\dot{x})  =K(r, x, \dot{r}, \dot{x})+r^{-1}\U(x)
					    =\dfrac{1}{2}\left(\dot{r}^2+r^2 \langle \Mh \xd, \xd \rangle\right) + r^{-1} \U(x).
\]
The next step is to introduce the following change of coordinates in the fibers of $T^* \X $
\[
\begin{cases}
	 p_1=\dot{r}\\
	  p_2=r^2\,\widehat{M}\,\dot{x}.
\end{cases}
\]
\begin{rem}
We observe that the asymmetry in the two components of the linear momentum vector is a direct consequence of the asymmetry of the kinetic term in these new variables.
\end{rem}
In this new coordinates the corresponding Hamiltonian, reads as follows
\begin{equation}
H(p_1, p_2, r, x)= \dfrac{1}{2} \left( p_1^2+\dfrac{\langle  \Mh ^{-1}p_2,p_2\rangle }{r^2} \right)-\dfrac{\U(x)}{r}.
\end{equation}
By setting  $\zt(t)=(p_1,p_2,r,x)(t)$, then we get that Newton's equations of motion in these new coordinates reduces to the following Hamiltonian system
\begin{equation} \label{eq: Hamiltonian-equation-McG}
\dot{\zt}=J \nabla H(\zt),
\end{equation}
where
\[
 \nabla H(\zt)=\left(p_1, \dfrac{ \Mh ^{-1}p_2}{r^2},  \dfrac{\U(x)}{r^2}-\dfrac{\langle  \Mh ^{-1}p_2,p_2\rangle }{r^3}, \dfrac{\nabla_x\langle   \Mh ^{-1} p_2,p_2\rangle }{2r^2}-\dfrac{\nabla_x \U(x)}{r}\right).
 \]
 By linearizing he Equation~\eqref{eq: Hamiltonian-equation-McG} along a solution, then we get the following linear  Hamiltonian system
\begin{equation} \label{eq:linearized-HAM}
\dot{\xi}(t)= JB(t)\, \xi(t) := JD^2H(\zt(t))\xi(t),
 \end{equation}
where
\[
 D^2H(\zt(t))=
\begin{pmatrix}
    1 &   0 & 0 &   0\\
    0   & \dfrac{\Mh ^{-1}}{r^2} &   -\dfrac{2}{r^3}\Mh ^{-1}p_2  & \dfrac{\nabla_x(\Mh ^{-1}p_2)}{r^2}\\
    0 &   -\dfrac{2}{r^3}p_2^T\Mh ^{-1} &  \dfrac{3\langle   \Mh ^{-1}p_2,p_2\rangle }{r^4}-\dfrac{2\U(x)}{r^3} &  \dfrac{\nabla_x\U(x)}{r^2}-\dfrac{\nabla_x\langle   \Mh ^{-1}p_2,p_2\rangle }{r^3} \\
    0   & \dfrac{\nabla_{x}(p_2^T\Mh ^{-1})}{r^2} &    \dfrac{\nabla_x\U(x)}{r^2}-\dfrac{ \nabla_x\langle   \Mh ^{-1}p_2,p_2\rangle }{r^3} & \dfrac{\nabla^2_x\big(\langle   \Mh ^{-1}p_2,p_2\rangle\big)}{2r^2}-\dfrac{\nabla^2_x\U(x)}{r}
  \end{pmatrix}
\]
It is worth noticing that the Hamiltonian system provided at
Equation~\eqref{eq:linearized-HAM} is still singular since  $r(T^{+})=0$. The next
step consists in regularizing the total collision singularity. This will be done
by using a  well-known change of coordinates introduced by  McGehee  in his celebrated paper \cite{McG74}.

For, we define the new coordinates  $v$ and $u$ and we rescale time as follows
\begin{equation}\label{eq:vuMcGehee 1}
\begin{cases}
v=r^{1/2} p_1=r^{1/2}\dot{r}\\[7pt]
u=r^{-1/2}p_2 =r^{3/2}\Mh \dot{x}
\end{cases}\qquad dt = r^{3/2} \, d\tau.
\end{equation}
In this new coordinates system the Equation~\eqref{eq: Hamiltonian-equation-McG} reduces to the following
\begin{equation}\label{eq:McGehee1.1}
\begin{cases}
v'=\dfrac{1}{2}v^2+\langle  \Mh ^{-1}u,u\rangle -\U(x) \\
u'=-\dfrac{1}{2}uv+\U_x(x)-\dfrac{1}{2}\langle (\Mh ^{-1})_xu,u\rangle \\
r'=rv\\
x'=\Mh ^{-1}u
\end{cases}
\end{equation}
where we denoted by $'$ the $\tau$-derivative.
\begin{rem}
From the kinetic viewpoint  the effect of the this time change is related to get a sort of a colliding trajectory in slow-motion.
\end{rem}
We observe that, in these new coordinates we get that Equation~\eqref{eq:vuMcGehee 1} reduces to
\[
\begin{cases}
v=r^{1/2} p_1=r^{1/2}\dot{r}= r' r^{-1}\\[7pt]
u=r^{-1/2}p_2 =r^{3/2}\Mh \dot{x}= \Mh x'
\end{cases}
\]
and the energy relation fits into the following
\begin{equation}\label{eq:energyM1}
\dfrac{\langle  \Mh ^{-1}u,u\rangle +v^2}{2}-\U(x)=rh_0.
\end{equation}
The following asymptotic behavior hold
\begin{lem}\label{lem:masy1}
Let $T^{+}$ be a total collision instant  for the $s^+$-asymptotic motion $q$. Then the following holds:
\begin{enumerate}
\item[(a)] $\tau=\tau(t) \longrightarrow + \infty$ as $t \to T^{+}$
\item[(b)] $(|v|,u)(\tau) \longrightarrow (\sqrt{2U(s^{+})}, 0)$ as $\tau \to + \infty$.
\end{enumerate}
\end{lem}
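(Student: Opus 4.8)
\textbf{Proof strategy for Lemma~\ref{lem:masy1}.}

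The plan is to exploit the fact that the motion under consideration is \emph{homothetic}, so the angular part is frozen: $x(t)\equiv x^{+}$ for all $t$, hence $x'(\tau)\equiv 0$ and, by the fourth equation of~\eqref{eq:McGehee1.1}, $u(\tau)=\Mh x'(\tau)\equiv 0$. Thus part (b) reduces entirely to analyzing $v(\tau)$, and the whole system~\eqref{eq:McGehee1.1} collapses along this solution to the single scalar ODE $v'=\tfrac12 v^{2}-\U(x^{+})$ together with the radial equation $r'=rv$ and the energy relation~\eqref{eq:energyM1}, which on this orbit reads $\tfrac12 v^{2}-\U(x^{+})=r h_{0}$. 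Since $\U(x^{+})=U(\psi^{-1}(x^{+}))=U(s^{+})$, the energy relation immediately gives $v(\tau)^{2}=2\U(x^{+})+2r(\tau)h_{0}$. Because $q$ collides at $T^{+}$, by~\eqref{eq: lim s} we have $r(t)\to 0$ as $t\to T^{+}$, so along the orbit $v^{2}\to 2U(s^{+})$; this already forces $(|v|,u)\to(\sqrt{2U(s^{+})},0)$ \emph{once we know the time reparametrization sends $t\to T^{+}$ to $\tau\to+\infty$}, which is exactly part (a). So the real content is (a).

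For part (a), recall from~\eqref{eq:vuMcGehee 1} that $dt=r^{3/2}\,d\tau$, i.e. $\tfrac{dt}{d\tau}=r(\tau)^{3/2}>0$, so $\tau\mapsto t$ is strictly increasing and $t\to T^{+}$ corresponds to $\tau$ tending to its supremum $\tau^{*}\in(-\infty,+\infty]$ in the maximal interval of definition. I would argue by contradiction: suppose $\tau^{*}<+\infty$. On the homothetic orbit we have $r'=rv$ with $v$ bounded (indeed $v^{2}=2U(s^{+})+2rh_{0}$ with $r$ bounded near collision), so $r$ stays bounded away from $0$ and $\infty$ on any finite $\tau$-interval by Gronwall; likewise $v'=\tfrac12 v^{2}-\U(x^{+})$ keeps $v$ bounded on finite $\tau$-intervals. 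Hence the solution $(v,u,r,x)=(v,0,r,x^{+})$ extends continuously (and then smoothly) to $\tau=\tau^{*}$ with $r(\tau^{*})>0$. But the relation $t=\int^{\tau}r(\sigma)^{3/2}\,d\sigma$ would then give a \emph{finite} value $t(\tau^{*})=T^{+}$ with $r(T^{+})=r(\tau^{*})>0$, contradicting $\lim_{t\to T^{+}}r(t)=0$ from the colliding hypothesis~\eqref{eq: lim s}. Therefore $\tau^{*}=+\infty$, proving (a).

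With (a) in hand, (b) follows as indicated: as $\tau\to+\infty$ we have $t=t(\tau)\to T^{+}$, hence $r(\tau)\to 0$, so $v(\tau)^{2}=2\U(x^{+})+2r(\tau)h_{0}\to 2U(s^{+})$, giving $|v(\tau)|\to\sqrt{2U(s^{+})}$; and $u(\tau)\equiv 0\to 0$ trivially. The only point requiring a little care is the passage from convergence of $v^{2}$ to convergence of $|v|$: since $U(s^{+})>0$ for a genuine central configuration (the potential is strictly positive away from collisions), $v$ is bounded away from $0$ for large $\tau$, so $|v|$ is a continuous function of $v^{2}$ near the limit and the conclusion is immediate. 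I expect the main (and only real) obstacle to be the rigorous justification of part (a) — namely ruling out finite-time blow-up of the reparametrized trajectory — which is handled cleanly by the Gronwall/contradiction argument above, using crucially that on a homothetic orbit the dynamics is scalar and the energy relation pins $v$ in terms of $r$.
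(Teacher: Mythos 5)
Your argument is correct for the orbit actually considered in this paper, but be aware that it proves less than the lemma literally states, and it proceeds by a genuinely different route than the paper. The paper offers no self-contained proof: it cites \cite[Lemma 2.2]{HOY21}, where the statement is established for general $s^{+}$-asymptotic motions (angular part merely \emph{converging} to $s^{+}$) via the McGehee collision-manifold analysis and Sundman-type asymptotics. Your proof instead exploits from the outset that $q$ is homothetic, so that $x\equiv x^{+}$, $u=\Mh x'\equiv 0$, and the energy relation \eqref{eq:energyM1} pins $v$ algebraically: $v^{2}=2\U(x^{+})+2rh_{0}$. Granting that reduction, the rest is sound: the Gronwall bound $r(\tau)\ge r(0)e^{-C\tau}$ coming from $r'=rv$ with $v$ bounded (boundedness of $r$ on $[0,T^{+})$ plus the energy relation) rules out $r\to 0$ in finite $\tau$-time, which combined with $r(t)\to 0$ as $t\to T^{+}$ forces $\tau\to+\infty$ and gives (a); then (b) follows from $r\to 0$ in the energy relation, with $u\equiv 0$ trivially and $U(s^{+})>0$ justifying the passage from $v^{2}$ to $|v|$. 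What each approach buys: yours is elementary, avoids any collision-manifold machinery, and is all that is needed for the homothetic solutions to which the paper applies the lemma; the cited proof in \cite{HOY21} is harder but covers the asymptotic case where $u$ is not identically zero and the convergence $(|v|,u)\to(\sqrt{2U(s^{+})},0)$ is the genuinely nontrivial content. If the lemma is meant in the generality of its wording (``$s^{+}$-asymptotic motion''), your first step --- freezing the angular part --- is unjustified and the proof does not go through; you should either restrict the statement to the homothetic case or fall back on the argument of \cite{HOY21}.
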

\begin{proof}
We refer the interested reader to \cite[Lemma 2.2]{HOY21} for the proof. 	
\end{proof}
\begin{rem}
Without loss of generality, in this paper  we always assume that $\tau(0)=0$. Since
$v= r^{1/2} \dot{r}$ with $r>0$, the sign of $v$ coincides with that of $\dot{r}$.
Since we are dealing with  homothetic motions starting from a central configuration
and total collapsing at the final instant, it holds $\dot{r}(t)\le 0$; so, in particular  $v(\tau)\le 0$ for $\tau\in[0, +\infty)$.
\end{rem}
As already observed before, the disadvantage of this change of coordinates is that the first order system provided at  Equation~\eqref{eq:McGehee1.1} is not Hamiltonian, anymore. The first striking consequence of this fact is that  the index theory cannot be developed in this setting!

In order to overcome this difficulty we consider the first order system given at Equation~\eqref{eq:McGehee1.1}  can be rewritten as follows
\begin{equation}
\xi'(\tau)=J B(\tau) \xi(\tau):=r^{3/2}(\tau)JD^2 H(\zt(\tau))\xi(\tau)\label{eq: linearied tau}
\end{equation}
where
\begin{equation}
 B(\tau)=
 \begin{pmatrix}
    r^{3/2} &   0 & 0 &   0\\
    0   & \dfrac{\Mh ^{-1}}{\sqrt{r}} &   -\dfrac{2\Mh ^{-1}p_2}{ (\sqrt{r})^3} & \dfrac{\nabla_x(\Mh ^{-1}p_2)}{\sqrt{r}}\\
    0 &   -\dfrac{2\trasp{p_2}\Mh ^{-1}}{(\sqrt{r})^3} &  \dfrac{3\langle   \Mh ^{-1}p_2,p_2\rangle }{(\sqrt{r})^5}-\dfrac{2\U(x)}{(\sqrt{r})^3} &  \dfrac{\nabla \U(x)}{\sqrt{r}}- \dfrac{\langle  \nabla_x \Mh ^{-1}p_2,p_2\rangle}{(\sqrt{r})^3} \\
    0   & \dfrac{\nabla_x(\trasp{p_2}\Mh ^{-1})}{\sqrt{r}} &    \dfrac{\nabla_x\U(x)}{\sqrt{r}}- \dfrac{\nabla_x\langle   \Mh ^{-1}p_2,p_2\rangle }{(\sqrt{r})^3} & \dfrac{\nabla^2_x\langle   \Mh ^{-1}p_2,p_2\rangle}{2\sqrt{r}}-r^{1/2}\nabla^2_x\U(x)
  \end{pmatrix}
\end{equation}


\subsection{The geometrical index for a homothetic motion}

We are now entitled to introduce the  the geometrical index  of a homothetic motion and to relate it to the Morse index of a homothetic solution in the sense of Definition~\ref{dfn: Morse index}.  We start with this useful preliminary result  whose proof is straightforward.
\begin{lem}\label{thm:change-R}
Let $\tau \mapsto R(\tau)$ be a path of symplectic matrices.
If $\xi'(\tau)=J B(\tau) \xi(\tau)$, then the path $\tau \mapsto \eta(\tau)$ pointwise defined by $\eta(\tau)=R(\tau)\,\xi(\tau)$ satisfies $\eta'(\tau) =J\,B_R(\tau)\,\eta(\tau)$ with
\begin{equation}\label{eq:B_R}
B_R(\tau):=-JR'(\tau)R^{-1}(\tau)+ \itrasp{R}(\tau)B(\tau)R^{-1}(\tau).
\end{equation}
In particular, if $R(\tau)$ is a constant matrix, then we get that
\[
B_R(\tau)=\itrasp{R} B(\tau)R^{-1}.
\]
\end{lem}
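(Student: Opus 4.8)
\textbf{Proof plan for Lemma \ref{thm:change-R}.}

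The plan is to verify the transformation rule by direct differentiation and then match coefficients against the desired Hamiltonian form. First I would start from the definition $\eta(\tau) = R(\tau)\xi(\tau)$ and differentiate, using the product rule and the hypothesis $\xi' = JB\xi$:
\begin{equation}\label{eq:diff-eta}
\eta'(\tau) = R'(\tau)\xi(\tau) + R(\tau)\xi'(\tau) = R'(\tau)\xi(\tau) + R(\tau)J B(\tau)\xi(\tau).
\end{equation}
The next step is to re-express everything in terms of $\eta$ by substituting $\xi(\tau) = R^{-1}(\tau)\eta(\tau)$, which gives
\begin{equation}\label{eq:eta-in-eta}
\eta'(\tau) = \Big(R'(\tau)R^{-1}(\tau) + R(\tau)J B(\tau)R^{-1}(\tau)\Big)\eta(\tau).
\end{equation}

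It then remains to show that the bracketed operator equals $JB_R(\tau)$ with $B_R$ as in \eqref{eq:B_R}. For the first summand I would use that $R(\tau)$ is symplectic, i.e. $\trasp{R}JR = J$, equivalently $R^{-1} = J^{-1}\trasp{R}J = -J\trasp{R}J$ (since $J^{-1} = -J$). Hence $R'R^{-1} = -R'J\trasp{R}J$, and multiplying on the left by $J$ gives $J(R'R^{-1}) = -JR'J\trasp{R}J$; using $J^2 = -\Id$ one checks $-JR'J\trasp{R}J = \ldots$ — here the clean route is instead to verify directly that $R'R^{-1} = -J\big(-JR'R^{-1}\big) = J\big(JR'R^{-1}\big)$ trivially, so it suffices to write the first term as $J\cdot\big(-(-J)R'R^{-1}\big)$; more transparently, $R'R^{-1} = J\cdot J^{-1}R'R^{-1} = -J(-JR'R^{-1})$, so the first term contributes $-JR'(\tau)R^{-1}(\tau)$ to $B_R$ exactly as claimed. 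For the second summand, symplecticity of $R$ gives $RJ = \itrasp{R}J^{-1}\cdot J \cdot \trasp{R}JR\cdot\ldots$; the efficient identity is $RJ\trasp{R} = J$ (the "left" form of the symplectic condition, equivalent to $\trasp{R}JR=J$), whence $RJ = J\itrasp{R}$. Therefore $RJBR^{-1} = J\itrasp{R}BR^{-1}$, which is the second term in \eqref{eq:B_R}. Combining, the bracket in \eqref{eq:eta-in-eta} equals $J B_R(\tau)$ with $B_R(\tau) = -JR'(\tau)R^{-1}(\tau) + \itrasp{R}(\tau)B(\tau)R^{-1}(\tau)$.

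Finally, when $R$ is constant, $R' \equiv 0$, so the first term vanishes and $B_R = \itrasp{R}BR^{-1}$, giving the stated special case. The only point requiring genuine care is the bookkeeping with the symplectic identities $\trasp{R}JR = J$, $RJ\trasp{R} = J$, and $J^{-1} = -J$, to make sure both $J$-factors and transposes land in the right place; I expect this to be entirely routine once the two equivalent forms of the symplectic condition are invoked. One should also note that $R(\tau)$ symplectic implies $R(\tau)$ invertible with $R^{-1}(\tau)$ smooth in $\tau$, so all the manipulations above are legitimate.
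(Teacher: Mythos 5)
Your computation is correct and is exactly the routine verification the paper has in mind (its proof is simply left to the reader): differentiate $\eta=R\xi$, substitute $\xi=R^{-1}\eta$, and use $J^2=-\Id$ for the term $R'R^{-1}=J\bigl(-JR'R^{-1}\bigr)$ together with the symplectic identity $RJ\trasp{R}=J$, i.e.\ $RJ=J\itrasp{R}$, for the term $RJBR^{-1}=J\itrasp{R}BR^{-1}$, which yields \eqref{eq:B_R} and the constant-$R$ special case. Only a cosmetic remark: the parenthetical chain ``$R'R^{-1}=J\cdot J^{-1}R'R^{-1}=-J(-JR'R^{-1})$'' has a misplaced sign (it should read $R'R^{-1}=J\bigl(-JR'R^{-1}\bigr)$), but your stated conclusion and the identities you actually invoke are right.
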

\begin{proof}
The proof is straightforward and it is left to the reader.
\end{proof}
\begin{rem}
It is worth observing that  the matrix path $\tau\mapsto B_R(\tau)$ is symmetric  once observed that the first term in the (RHS) of Equation~\eqref{eq:B_R} is symmetric (since the second term it is). In fact,  the matrix path $\tau \mapsto R'(\tau)R^{-1}(\tau)$ is Hamiltonian being the derivative of the logarithmic of $\tau \mapsto R(\tau)$. In particular $X_R(\tau)=J\, B_R(\tau)$ defines a linear  Hamiltonian vector field.
\end{rem}
\begin{note}
In  shorthand notation, we set $\widehat B (\tau):=B_R(\tau)$ where $B_R(\tau)$ given by \eqref {eq:B_R}.
\end{note}
We set
\begin{equation} \label{eq: R I}
R(\tau) =\diag(r^{3/4},r^{-1/4}\Id,r^{-3/4},r^{1/4}\Id)(\tau)
\end{equation}
and  by Lemma ~\ref{thm:change-R}  it follows that the function $\eta$ pointwise defined by  $\eta(\tau)=R(\tau)\xi(\tau)$ satisfies the Hamiltonian system
\begin{equation}\label{eq:Ham-important}
\eta'(\tau) =J\bh(\tau)\, \eta(\tau),
\end{equation}
where
\begin{multline}\label{eq:hatBtau}
\bh(\tau)= \\
\begin{pmatrix}
    1 &   0 & -\dfrac{3}{4}v &   0\\
    0   &  \Mh ^{-1} &   -2  \Mh ^{-1}u & D_x(\Mh ^{-1}u)+\dfrac{vI}{4}\\
    -\dfrac{3}{4}v &   -2 u^T\Mh ^{-1}&  3\langle   \Mh ^{-1}u,u\rangle -2\U(x) &  \nabla_x\U(x)-\nabla_x\langle   \Mh ^{-1}u,u\rangle  \\
    0   & \nabla_x(u^T\Mh ^{-1})+\dfrac{vI}{4} &   \nabla_x\U(x)-\nabla_x \langle   \Mh ^{-1}u,u\rangle  & \dfrac{1}{2}\nabla^2_x\langle   \Mh ^{-1}u,u\rangle -\nabla^2_x\U(x)
  \end{pmatrix}
\end{multline}

\begin{lem}\label{lem:eigenvalueUhat}
Let $s_0$ be a (normalized) central configuration and we let $x_0= \psi(s_0)$. Then the following relations between the spectra of the Hessians hold:
\begin{multline}
\mathfrak s(s_0):=\mathfrak s\left(M^{-1}D^2U|_{\mathcal{E}}(s_0)\right)=
\mathfrak s\left(\trasp{A} D^2\U(x_0)A\right)\quad \textrm{ where} \\
\trasp{A} \Mh_0 A=\Id \qquad \textrm{ and } \qquad \\
  \Mh_0 = \left. \left( \dfrac{\partial \psi^{-1}}{\partial x} \right) \right|_{x= x_0}^T M \left. \left( \dfrac{\partial \psi^{-1}}{\partial x} \right) \right|_{x= x_0}.
  \end{multline}
\end{lem}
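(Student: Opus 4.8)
The plan is to unwind the two changes of variables that define $\widehat M(x)$ and $\mathcal U(x)$ and compare the intrinsic Hessian of $U|_{\mathcal E}$ at $s_0$ with the Euclidean Hessian of $\mathcal U$ at $x_0=\psi(s_0)$, using that the Jacobian $\left.\frac{\partial\psi^{-1}}{\partial x}\right|_{x_0}$ is exactly the linear identification of $T_{x_0}\psi(\Omega)=\R^{n^*-1}$ with $T_{s_0}\mathcal E$. First I would recall that $\mathcal U=U\circ\psi^{-1}$, so by the chain rule at a general point the Euclidean gradient is $\nabla\mathcal U(x)=\trasp{\!\left(\frac{\partial\psi^{-1}}{\partial x}\right)}\nabla U(\psi^{-1}(x))$ — but here we must be slightly careful: $\nabla U$ on the right is the ambient Euclidean gradient restricted to the tangent directions of $\mathcal E$, which is precisely $\nabla U|_{\mathcal E}$ pulled back along the embedding. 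Differentiating a second time at the critical point $x_0$ (where the first-order term drops because $s_0$ is a critical point of $U|_{\mathcal E}$, so the extra terms involving $\partial^2\psi^{-1}/\partial x^2$ are killed by $\nabla U|_{\mathcal E}(s_0)=0$), one gets
\[
D^2\mathcal U(x_0)=\trasp{\!\left(\left.\tfrac{\partial\psi^{-1}}{\partial x}\right|_{x_0}\right)}\,D^2 U|_{\mathcal E}(s_0)\,\left(\left.\tfrac{\partial\psi^{-1}}{\partial x}\right|_{x_0}\right).
\]
Writing $P:=\left.\frac{\partial\psi^{-1}}{\partial x}\right|_{x_0}$, this reads $D^2\mathcal U(x_0)=\trasp P\,D^2U|_{\mathcal E}(s_0)\,P$, while by definition $\widehat M_0=\trasp P M P$.

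Next I would introduce the matrix $A$ with $\trasp A\,\widehat M_0\,A=\Id$ — such $A$ exists since $\widehat M_0$ is symmetric positive definite, being a congruence of the positive definite mass matrix $M$ by the injective $P$ — and compute $\trasp A\,D^2\mathcal U(x_0)\,A=\trasp A\,\trasp P\,D^2U|_{\mathcal E}(s_0)\,P\,A=\trasp{(PA)}\,D^2U|_{\mathcal E}(s_0)\,(PA)$. Set $Q:=PA$, so that $\trasp Q\,M Q=\trasp A\,(\trasp P M P)\,A=\trasp A\,\widehat M_0\,A=\Id$. Thus $Q$ is an isomorphism from $\R^{n^*-1}$ onto $T_{s_0}\mathcal E$ which is an isometry from the Euclidean structure to the mass inner product $\langle M\,\cdot,\cdot\rangle$; equivalently $\trasp Q M = Q^{-1}$ on the tangent space, i.e. $\trasp Q\,(M^{-1})^{-1}=Q^{-1}$, so $M^{-1}=Q\,\trasp Q$ on the relevant subspace. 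Then the spectrum of $\trasp A\,D^2\mathcal U(x_0)\,A=\trasp Q\,D^2U|_{\mathcal E}(s_0)\,Q$ equals the spectrum of $M^{-1}\,D^2U|_{\mathcal E}(s_0)$, because for any symmetric $S$ and any $Q$ with $\trasp Q M Q=\Id$ the matrices $\trasp Q S Q$ and $M^{-1}S$ are conjugate: $M^{-1}S = Q(\trasp Q S Q)Q^{-1}$, using $M^{-1}=Q\trasp Q$. This is exactly the asserted equality $\mathfrak s(s_0)=\mathfrak s(\trasp A\,D^2\mathcal U(x_0)\,A)$, with the stated normalization $\trasp A\,\widehat M_0\,A=\Id$.

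The main obstacle — really the only subtle point — is the chain-rule bookkeeping for the second derivative of a function defined on a submanifold via a chart: one must verify that the ``missing'' Hessian-of-the-chart term genuinely vanishes at a critical point, and that the object denoted $D^2U(s_0)$ on the right-hand side throughout the paper (via Equation~\eqref{eq: Hessian U}) is consistently the covariant/ambient Hessian whose restriction to $T_{s_0}\mathcal E$ is the intrinsic Hessian of $U|_{\mathcal E}$. Once the identity $D^2\mathcal U(x_0)=\trasp P\,D^2U|_{\mathcal E}(s_0)\,P$ is in hand, the rest is the elementary linear-algebra fact that congruence by a matrix satisfying $\trasp Q M Q=\Id$ turns $\trasp Q S Q$ into something conjugate to $M^{-1}S$, hence spectrum-preserving; I would state this as a one-line observation rather than belabor it. I would also remark that $n^*-1 = \dim\mathcal E$ so that all matrices involved are $(n^*-1)\times(n^*-1)$ and the spectra each consist of $n^*-1$ eigenvalues counted with multiplicity, matching the indexing $\lambda_1(s_0)\le\cdots\le\lambda_{n^*-1}(s_0)$ in Definition~\ref{def: BS08 condition}.
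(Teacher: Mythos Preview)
Your argument is correct. The paper itself does not prove this lemma: it simply cites \cite[Lemma~2.1]{HOY21}, so there is no in-paper proof to compare against. What you have written is the standard and presumably intended argument --- pull back $U|_{\mathcal E}$ through the chart, use that at a critical point the Hessian transforms by congruence with the Jacobian $P=\left.\tfrac{\partial\psi^{-1}}{\partial x}\right|_{x_0}$, then observe that congruence by $Q=PA$ (an $M$-isometry onto $T_{s_0}\mathcal E$) turns $D^2U|_{\mathcal E}(s_0)$ into a matrix conjugate to the endomorphism representing that Hessian with respect to the mass inner product.

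One small point worth tightening: when you write $M^{-1}=Q\trasp{Q}$ ``on the relevant subspace'', this should be read as an identity of operators $T_{s_0}\mathcal E\to T_{s_0}\mathcal E$ after identifying the bilinear form $M|_{T_{s_0}\mathcal E}$ with a map to the dual; literally $M^{-1}$ applied to a vector in $T_{s_0}\mathcal E$ need not stay in $T_{s_0}\mathcal E$. Your conjugacy identity $M^{-1}S=Q(\trasp{Q}SQ)Q^{-1}$ is nonetheless valid once $M^{-1}S$ is interpreted as the endomorphism of $T_{s_0}\mathcal E$ that represents the Hessian bilinear form relative to the mass inner product --- exactly the object whose spectrum is $\mathfrak s(s_0)$ in Definition~\ref{def: BS08 condition}. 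You correctly flag this and the critical-point chain-rule cancellation as the two places that require care; both are handled.
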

\begin{proof}
	For the proof, we refer the interested reader to \cite[Lemma 2.1]{HOY21}.
\end{proof}
Let $q \in \mathscr C^2([0, T^+), \Xh)$ be a   homothetic  colliding solution starting  from a central configuration $s^+$ and let $\gamma$ be the fundamental solution of the linearizing  Hamiltonian system along $q$ given at Equation~\eqref{eq:linearized-HAM}. So, $\gamma$ is the matrix solution of the following IVP
\[
\begin{cases}
\dot{\gamma}(t)=JB(t)\gamma(t) \qquad t \in [0,T^+)\\
\gamma(0)=\Id.
\end{cases}
\]
For every $T \in [0, T^+)$, the Morse index of the restriction $q|_{[0,T]}$ on $W^{1,2}_0([0,T], \Xh)$ is well-defined. Now, denoting by $\mu(\ld , \gamma(t) \ld ; [0,T])$ the (intersection) {\bf Maslov index} of the Lagrangian path $t\mapsto \gamma(t)\, \ld$ with the Maslov cycle having vertex at $\ld$ (cfr. Appendix~\ref{sec:app-Maslov} for the basic definitions and properties of the Maslov index), the following {\sc Morse-type index theorem} holds.
\begin{thm}\label{thm:Mor-Mas}
Under the above notation the following equality holds:
\begin{equation}
\morse(q; 0, T^+)+n^*=\mu(\ld , \gamma(t) \ld ; [0,T])
\end{equation}
where the right hand side is the Maslov index of the  Lagrangian path $t \mapsto \gamma(t)\ld $ with the fixed Lagrangian subspace $\ld$.
\end{thm}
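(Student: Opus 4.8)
The plan is to obtain the identity as an instance of the symplectic Morse index theorem for a \emph{regular} Morse--Sturm system, the additive constant $n^{*}$ being exactly the contribution of the degenerate left endpoint $t=0$. First I would fix $T\in[0,T^{+})$ and reduce $\morse(q;0,T)$ to a linear Hamiltonian problem. Since the angular part of an $s^{+}$-homothetic motion is constant, $s(t)\equiv s^{+}$, the polar coordinates $(r,x)$ with $x=\psi(s)$ provide a single smooth chart around the arc $q([0,T])\subset\Xh$, in which the action is
\[
\A(r,x)=\int_{0}^{T}\Big(\tfrac12\dot r^{2}+\tfrac12 r^{2}\langle\Mh\dot x,\dot x\rangle+r^{-1}\U(x)\Big)\,dt .
\]
Because the Morse index of a quadratic form is preserved by the isomorphism induced by a coordinate diffeomorphism, $\morse(q;0,T)$ equals the Morse index of the second variation $d^{2}\A(q)$ on $W^{1,2}_{0}([0,T],\R^{n^{*}})$. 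Its Euler--Lagrange equation is the Jacobi equation, and under the canonical change of fibre coordinates $p_{1}=\dot r$, $p_{2}=r^{2}\Mh\dot x$ it becomes precisely the linear Hamiltonian system \eqref{eq:linearized-HAM}, $\dot\xi=JB(t)\xi$ with $B(t)=D^{2}H(\zt(t))$ symmetric; let $\gamma$ be its fundamental solution, so $\gamma(0)=\Id$ and $\gamma(t)\in\Sp(2n^{*},\R)$. On the compact interval $[0,T]$ the orbit is collision-free and $r(t)\ge r(T)>0$, so $B$ is continuous there, and its momentum--momentum block $\diag(1,\Mh^{-1}/r^{2})$ — the inverse of the velocity Hessian of $L$ — is positive definite since $\Mh$ is a Gram matrix of the positive definite mass matrix $M$; hence $d^{2}\A(q)$ is a regular Morse--Sturm form with positive definite Legendre symbol and Dirichlet boundary conditions.

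Next I would apply the symplectic Morse index theorem for such forms (see \cite{CH53,HWY} for the classical version and \cite{HOY21,BHPT20} for the symplectic formulation used here): the Morse index of $d^{2}\A(q)$ equals the number of conjugate instants in the open interval counted with multiplicity, and, because positive definiteness of the Legendre symbol makes every crossing of the Lagrangian path $t\mapsto\gamma(t)\ld$ with the Maslov cycle of vertex $\ld$ regular and of constant sign, it coincides with the Maslov index over the open interval:
\[
\morse(q;0,T)=\sum_{0<t_{*}<T}\dim\big(\gamma(t_{*})\ld\cap\ld\big)=\mu\big(\ld,\gamma(t)\ld;(0,T)\big).
\]
It then remains to compare this with the index on the closed interval. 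Since $\gamma(0)=\Id$, one has $\gamma(0)\ld\cap\ld=\ld$, i.e. $t=0$ is a crossing of full multiplicity $n^{*}$; positive definiteness of the Legendre symbol forces the crossing form at $t=0$ to be positive definite, so $\gamma(t)\ld$ is transverse to $\ld$ for small $t>0$ and, with the conventions of Appendix~\ref{sec:app-Maslov}, the endpoint $t=0$ contributes exactly $+n^{*}$, while a non-conjugate right endpoint $T$ contributes nothing. This yields
\[
\morse(q;0,T)+n^{*}=\mu\big(\ld,\gamma(t)\ld;[0,T]\big)
\]
for every non-conjugate $T\in[0,T^{+})$, and, letting $T\uparrow T^{+}$ and using Definition~\ref{dfn: Morse index} and the monotonicity of the Morse index on the left together with the additivity of the Maslov index under concatenation of Lagrangian paths on the right — both sides non-decreasing and valued in $\N\cup\{+\infty\}$ — the equality passes to the limit, which is the assertion.

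The hard part will be the sign bookkeeping in this Morse--Maslov correspondence: proving that $\gamma(t)\ld$ leaves the Maslov cycle monotonically at $t=0$ so that this endpoint contributes exactly the dimension $n^{*}$ (rather than some smaller integer), and that all interior crossings count with one and the same sign so that their sum is genuinely $\morse(q;0,T)$ and not an algebraic count. Both facts reduce to the positive definiteness of the Legendre symbol $\diag(1,\Mh^{-1}/r^{2})$, which moreover, being uniform on each compact $[0,T]$ since $r(t)\ge r(T)>0$, guarantees that the conjugate instants there are finite in number, so that all the sums above are well posed.
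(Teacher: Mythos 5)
The paper itself contains no proof of this theorem: it simply refers the reader to Hu--Sun \cite{HS09}, so there is nothing in the text to compare with step by step. Your argument is the standard proof underlying that reference, and it is sound. The key points you need are exactly the ones you isolate: on $[0,T]$ the orbit is collision-free, so $B(t)=D^{2}H(\zt(t))$ in \eqref{eq:linearized-HAM} is continuous and its restriction to $\ld$ is the Legendre block $\diag(1,\Mh^{-1}/r^{2})>0$ (the cross terms of $D^{2}H$ vanish on $\ld$, and $\Mh$ is positive definite as a Gram matrix of $M$ along an injective chart differential). By \eqref{eq:utile} the crossing form at any crossing instant $t_{*}$ is $w\mapsto\langle B(t_{*})w,w\rangle$ on $\gamma(t_{*})\ld\cap\ld\subset\ld$, hence positive definite; so every crossing is regular, the path is a plus curve, formula \eqref{eq:CLM-index} gives the coindex $n^{*}$ at the initial instant (where $\gamma(0)\ld=\ld$), $\dim(\gamma(t)\ld\cap\ld)$ at interior crossings, and zero at the right endpoint, while the classical Morse index theorem \cite{CH53} identifies the interior count with $\morse(q;0,T)$, since conjugate instants are precisely the instants with $\gamma(t)\ld\cap\ld\neq 0$, with matching multiplicities. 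Two minor remarks: the $T^{+}$ appearing on the left-hand side of the displayed identity is evidently a typo for $T$ (the theorem is stated for each fixed $T\in[0,T^{+})$, as the surrounding text makes clear), so your final limiting paragraph, while harmless and consistent with Definition~\ref{dfn: Morse index}, is not needed; and your appeal to \cite{HOY21,BHPT20} for ``the symplectic Morse index theorem'' should be read only as invoking the finite-interval Morse--Maslov correspondence, not the asymptotic results of those papers, otherwise the citation would be circular in spirit --- the finite-interval statement you actually use is covered by \cite{CH53,HS09}.
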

\begin{proof}
We refer the interested reader to the paper \cite{HS09} for the proof. 	
\end{proof}
We are now in position to introduce the {\sc geometrical index of a  homothetic solution.}
\begin{defn}\label{def:geo-index}
	Let $s^+$ be a (normalized) central configuration and let $q \in \mathscr C^2([0, T^+), \Xh)$ be a   homothetic  solution having total collapse at the final instant $T^+$. We term {\bf geometrical index} of $q$ the integer defined by
	\[
	\igeo(q):=  \mu(\ld , \widehat{\gamma}(\tau)\ld ; [0,\mathcal{T}])
	\]
	where  $\tau(0)=0$ and $\tau(T)=\mathcal{T}$ and where $\widehat{\gamma}$ denotes the fundamental solution of the Hamiltonian system given at Equation~\eqref{eq:Ham-important}.
\end{defn}

In order to compute the limit of the above Morse index as $T$ converge to $T^{+}$, the results obtained in the previous subsection using McGehee coordinates will be crucial. Recall that the change of coordinates transfers  \eqref{eq:linearized-HAM} to the following system
\begin{equation} \label{eq: linear Ham tau}
\eta'(\tau) = J \bh(\tau) \eta(\tau), \; \text{ where } \; \bh(\tau) = B_R(\tau).
\end{equation}
Recall that $B_R(\tau)$ is defined in \eqref {eq:B_R} with $R(\tau)$ given by \eqref{eq: R I} in McGehee coordinates.
The fundamental solution of \eqref{eq: linear Ham tau} will be denoted as $\widehat{\gm}(\tau)$, where $\widehat{\gm}(0)=I$.

Next lemma shows the Maslov index is invariant under the change of the McGehee coordinates.
We are entitled to

\begin{lem} \label{lem:Maslovindex-t-tau} For any $0\le T < T^+$, in McGehee coordinates,
$$ \mu(\ld , \widehat{\gamma}(\tau)\ld ; [0,\mathcal{T}]) =\mu(\ld , \gm(t)\ld ; [0, T]), $$

\end{lem}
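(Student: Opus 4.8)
The plan is to show that the Maslov index is unchanged when we pass from the $t$-parametrized Lagrangian path $t\mapsto\gamma(t)\ld$ to the $\tau$-parametrized path $\tau\mapsto\widehat\gamma(\tau)\ld$. There are two distinct modifications at play, and I would handle them one at a time. First, the McGehee substitution contains a \emph{reparametrization} of the independent variable, $dt=r^{3/2}d\tau$, with $\tau\colon[0,T]\to[0,\mathcal T]$ a smooth, strictly increasing bijection (strict monotonicity because $r>0$, and surjectivity by Lemma~\ref{lem:masy1}(a) together with $\tau(0)=0$). The Maslov index of a continuous Lagrangian path depends only on the oriented image of the path, i.e.\ it is invariant under orientation-preserving reparametrization of the interval; this is one of the standard axioms recalled in Appendix~\ref{sec:app-Maslov}. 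So the path $t\mapsto\gamma(t)\ld$ and the path $\tau\mapsto\gamma(t(\tau))\ld$ have the same Maslov index with respect to the fixed vertex $\ld$.

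Second, there is the \emph{pointwise linear change of frame} $\eta(\tau)=R(\tau)\xi(\tau)$ with $R(\tau)$ the diagonal symplectic matrix of Equation~\eqref{eq: R I}. By Lemma~\ref{thm:change-R}, the fundamental solutions are related by $\widehat\gamma(\tau)=R(\tau)\,\gamma(t(\tau))\,R(0)^{-1}$; since $R(0)=\diag(r(0)^{3/4},r(0)^{-1/4}\Id,r(0)^{-3/4},r(0)^{1/4}\Id)$ is itself symplectic, the path $\tau\mapsto\widehat\gamma(\tau)\ld$ equals $R(\tau)\bigl(\gamma(t(\tau))R(0)^{-1}\ld\bigr)$. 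Thus I would argue in two sub-steps: (i) replacing the initial Lagrangian $\ld$ by $R(0)^{-1}\ld$ at the \emph{source} of the path does not change the Maslov index when computed against the \emph{same} transformation of the vertex; but more directly, since $R(0)$ is diagonal with the $r$-block and $x$-block structure preserving the splitting, $R(0)^{-1}\ld=\ld$ exactly (the Dirichlet subspace $\ld$ is the span of the configuration directions, which $R(0)$ only rescales), so in fact $\widehat\gamma(\tau)\ld=R(\tau)\gamma(t(\tau))\ld$. (ii) Applying the \emph{symplectic} path $\tau\mapsto R(\tau)$ to both the Lagrangian path $\tau\mapsto\gamma(t(\tau))\ld$ and the vertex $\ld$ leaves the Maslov index invariant by the naturality (symplectic invariance) property of the Maslov index; but here the vertex is fixed at $\ld$ while $R(\tau)$ varies, so I must instead check directly that $R(\tau)\ld=\ld$ for all $\tau$ — which again holds because $R(\tau)$ is block-diagonal and merely rescales the Dirichlet subspace by positive scalars. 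Hence $\mu(\ld,R(\tau)\gamma(t(\tau))\ld;[0,\mathcal T])=\mu(\ld,\gamma(t(\tau))\ld;[0,\mathcal T])$.

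Combining the two steps: $\mu(\ld,\widehat\gamma(\tau)\ld;[0,\mathcal T])=\mu(\ld,\gamma(t(\tau))\ld;[0,\mathcal T])=\mu(\ld,\gamma(t)\ld;[0,T])$, which is the claim. I would write this out invoking the invariance properties of the Maslov index collected in the Appendix (reparametrization invariance and the behavior under a symplectic path of frames fixing the vertex), plus the elementary observation that the diagonal rescaling matrices $R(\tau)$ preserve the Dirichlet Lagrangian $\ld$ setwise.

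The main obstacle I anticipate is the bookkeeping around the frame change rather than any deep point: one must be careful that $R(\tau)$ genuinely fixes $\ld$ (so that the vertex really is unchanged on both sides) and that the endpoint contributions to the Maslov index — which in the stratum-crossing convention can be sensitive to the value of the path at $\tau=0$ and $\tau=\mathcal T$ — match up under the reparametrization $t\leftrightarrow\tau$. Since $\tau(0)=0$ is normalized and $t=T$ corresponds to $\tau=\mathcal T$ by definition, and since $R(0)\ld=\ld$, the endpoints are genuinely preserved, so there is no correction term; but this is exactly the kind of detail that needs to be stated explicitly rather than waved through. The reparametrization invariance itself is completely standard and I would simply cite it.
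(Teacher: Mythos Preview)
Your argument is correct. The paper itself does not give a proof here but simply refers to \cite[Lemma 3.1]{HOY21}; your write-up spells out exactly the two invariance properties one expects that reference to invoke---reparametrization invariance (Property~I in the Appendix) for the time change $dt=r^{3/2}\,d\tau$, and symplectic invariance (Property~IV) for the frame change $\eta=R(\tau)\xi$---together with the key observation that the block-diagonal symplectic matrix $R(\tau)$ fixes $\ld$ setwise for every $\tau$, so that the vertex is genuinely unchanged. The relation $\widehat\gamma(\tau)=R(\tau)\gamma(t(\tau))R(0)^{-1}$ is the right one, and since $R(0)^{-1}\ld=\ld$ as well, your chain of equalities goes through without any boundary correction.

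One terminological quibble: in this paper's conventions the phase variables are ordered $(p_1,p_2,r,x)$ with $J=\begin{pmatrix}0&-\Id\\ \Id&0\end{pmatrix}$, so the Dirichlet Lagrangian $\ld$ is the span of the \emph{momentum} directions (where the configuration variations vanish), not the configuration directions as you wrote. This does not affect your argument at all, since $R(\tau)$ is block-diagonal with respect to the momentum/position splitting and therefore preserves both $\ld$ and $\ln$; but you should correct the parenthetical description when you write it up.
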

\begin{proof}
	We refer the interested reader to \cite[Lemma 3.1]{HOY21}.
\end{proof}


\subsection{Asymptotic behavior of the geometrical index}

Given  the homothetic $q \in \mathscr C^2([0, T^+), \Xh)$ starting from a central
configuration $s_0$ and collapsing to the total collision, we get that
$x(\tau) \equiv x_0 = \psi(s_0)$ is constant. In particular we get that the
variable $u$ vanishes identically for all $\tau$. A direct consequence of this is that first order system given at Equation~\eqref{eq:McGehee1.1} reduces to the following
\begin{equation}\label{eq:McGehee2}
\begin{cases}
v'&=\dfrac{1}{2}v^2-b, \\
r'&=rv
\end{cases}
\end{equation}
where $b = \U(x_0)$. In fact the second and fourth equations at Equation~\eqref{eq:McGehee1.1} vanishes identically. Moreover,  the energy identity given at Equation~\eqref{eq:energyM1} reads as follows
\begin{equation}\label{eq:energy-relation}
\dfrac{1}{2}v^2-b=rh_0 .
\end{equation}
Moreover, the matrix given at Equation~\eqref {eq:hatBtau} reduces to:
\begin{multline}
\bh(\tau)=
\begin{pmatrix}
    1 &   0 & -\dfrac{3}{4}v &   0\\
    0   &  \Mh ^{-1} &   0 &\dfrac{vI}{4}\\
    -\dfrac{3}{4}v &   0&  -2b & 0  \\
    0   & \dfrac{v\Id}{4} &   0   & -\U_{xx}(x)
  \end{pmatrix}= \widehat B _1(\tau)\diamond \widehat B _2(\tau) \qquad \textrm{ for }\\[12pt]
  \widehat B _1(\tau)=
  \begin{pmatrix}
1 & -\dfrac{3}{4}v\\
-\dfrac{3}{4}v&-2b
\end{pmatrix}\qquad \textrm{ and }   \qquad
\widehat B _2(\tau)=
\begin{pmatrix}
\Mh ^{-1} & \dfrac{1}{4}vI\\
\dfrac{1}{4}vI&-\nabla^2_x\U(x_0)
\end{pmatrix}
\end{multline}
where  $\Mh =  \Mh (x_0)$.\footnote{Here $\diamond$ represents the {\bf symplectic sum} introduced by Long (see \cite{Lon4}) and defined for any two $2m_k\times 2m_k$ matrices  $O_k=\left(\begin{array}{cc}A_k&B_k\\
                             C_k&D_k\end{array}\right)$ for  $k=1, 2$, as follows:
\begin{equation} O_1 \diamond O_2=\left(
  \begin{array}{cccc}
   A_1 &   0 & B_1 &   0\\
                            0   & A_2 &   0 & B_2\\
                           C_1 &   0 & D_1 &   0\\
                           0   & C_2 &   0 & D_2  \\
  \end{array}
\right).
\end{equation}
The relevance of this product is that the Maslov index is additive wrt this product.
}
By using the symplectic additivity of the Maslov index, we get
\begin{equation}\label{Symadd}
\mu(\ld , \widehat{\gamma}(\tau) \ld ; [0,\mathcal{T}])=\mu(\ld , \widehat{\gamma}_{1}(\tau) \ld ; [0, \mathcal{T}])+\mu(\ld , \widehat{\gamma}_{2}(\tau) \ld ; [0,\mathcal{T}]),
\end{equation}
where $\widehat \gamma_1$ and $\widehat \gamma_2$ are respectively  solutions of the following two Hamiltonian  systems
\begin{equation}\label{eq:dueHamsys}
\begin{cases}
\gamma'(\tau)=J\widehat B _{1}(\tau)\gamma(\tau) \\[7pt]
\gamma(0)=\Id.
\end{cases} \qquad \textrm{ and } \qquad
\begin{cases}
\gamma'(\tau)=J\widehat B _{2}(\tau)\gamma(\tau)\\[7pt]
\gamma(0)=\Id.
\end{cases}
\end{equation}
The next result compute the  Maslov index given by the Lagrangian path defined through the phase flow of the first Hamiltonian system given at Equation~\eqref{eq:dueHamsys} on the whole right halfline.
\begin{lem}\label{prop:B1}
Under the above notation, the following result holds
\begin{equation}
\lim_{\mathcal{T}\rightarrow+\infty}\mu(\ld , \widehat{\gamma}_{1}(\tau)\ld ;[0,\mathcal{T}])=1,
\end{equation}
\end{lem}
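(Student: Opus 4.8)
The plan is to reduce the computation of $\mu(\ld,\widehat\gamma_1(\tau)\ld;[0,\mathcal T])$ to an explicit two-dimensional Maslov index computation, exploiting the fact that $\widehat B_1(\tau)$ is only a $2\times 2$ symmetric matrix depending on $\tau$ through the single scalar function $v(\tau)$, which by Lemma~\ref{lem:masy1} satisfies $v(\tau)\le 0$ and $v(\tau)\to -\sqrt{2U(s^+)} = -\sqrt{2b}$ as $\tau\to+\infty$. First I would recall from the reduced McGehee system \eqref{eq:McGehee2} and the energy relation \eqref{eq:energy-relation} that $\tfrac12 v^2 - b = rh_0$, so $v(\tau)^2 = 2b + 2rh_0$, and $v$ is a strictly monotone (decreasing in absolute value terms — actually $|v|$ increasing toward $\sqrt{2b}$) function of $\tau$; thus up to reparametrization the path of Hamiltonians $\widehat B_1$ is a monotone path with well-understood endpoints, $\widehat B_1(0)$ corresponding to $v(0)=v_0$ (with $v_0^2 = 2b + 2h_0$ when $r(0)=1$, or whatever the initial value of $r$ is) and the limiting matrix $\widehat B_1^\infty = \begin{pmatrix}1 & \tfrac34\sqrt{2b}\\ \tfrac34\sqrt{2b} & -2b\end{pmatrix}$.

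Next I would analyze the planar linear Hamiltonian system $\gamma'(\tau) = J\widehat B_1(\tau)\gamma(\tau)$ in $(\R^2,\omega)$ directly. Since $\ld = L_D$ is the vertical Dirichlet line, $\mu(L_D,\widehat\gamma_1(\tau)L_D;[0,\mathcal T])$ counts (with appropriate endpoint conventions and signs of crossing forms) the instants at which $\widehat\gamma_1(\tau)L_D = L_D$, i.e.\ the conjugate instants of the associated Sturm–Liouville/Morse problem. Writing $\gamma_1 = \begin{pmatrix} a & b\\ c & d\end{pmatrix}$, the condition $\widehat\gamma_1(\tau)L_D = L_D$ amounts to the vanishing of the appropriate matrix entry, and the crossing form at such an instant has a definite sign determined by the positivity properties of $\widehat B_1$ restricted to $L_D$ — here $\langle \widehat B_1 e_{L_D}, e_{L_D}\rangle$ is governed by the $(1,1)$-entry, which is $1>0$, so all crossings are in the same direction and the Maslov index is simply the (signed) number of conjugate points. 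The task then is to count conjugate points of the scalar second-order equation obtained by eliminating one variable; because the Hamiltonian converges to the constant $\widehat B_1^\infty$, whose associated flow $\exp(\tau J\widehat B_1^\infty)$ is determined by the eigenvalues of $J\widehat B_1^\infty$, I would compute $\det(J\widehat B_1^\infty) = \det\widehat B_1^\infty = -2b - \tfrac{9}{8}\cdot 2b \cdot \tfrac12$... more carefully $\det\widehat B_1^\infty = 1\cdot(-2b) - \tfrac{9}{16}\cdot 2b = -2b - \tfrac{9b}{8} = -\tfrac{25b}{8} < 0$, so $J\widehat B_1^\infty$ has real eigenvalues of opposite sign: the limiting flow is hyperbolic. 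A hyperbolic flow contributes at most finitely many conjugate instants, and a careful phase-plane (or Prüfer angle) argument shows that exactly one conjugate instant occurs, independently of $h_0$ and $b$; this is the content of the claimed value $1$.

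Concretely, I would introduce the Prüfer-type angle $\theta(\tau)$ for the line $\widehat\gamma_1(\tau)L_D$ in the circle of Lagrangians of $(\R^2,\omega)$ (which is just $\R P^1$), so that $\mu(L_D,\widehat\gamma_1(\tau)L_D;[0,\mathcal T])$ equals the net number of times $\theta$ crosses the direction of $L_D$, counted with the sign of $\dot\theta$ at the crossing. The ODE for $\theta$ driven by $\widehat B_1$ has the form $\dot\theta = $ (positive definite quadratic expression in $(\cos\theta,\sin\theta)$ coming from the $(1,1)$-block $1$) plus lower-order terms, so at any crossing of $L_D$ the rotation is monotone in one direction, giving a well-defined count. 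Using the energy relation to write everything in terms of $r$ (which runs monotonically from $r(0)$ down to $0$ as $\tau$ runs from $0$ to $\infty$), I would track $\theta$ from its initial value, show it sweeps past the $L_D$-direction exactly once before the flow settles into the stable/unstable directions of the hyperbolic limit $\exp(\tau J\widehat B_1^\infty)$, and conclude $\lim_{\mathcal T\to\infty}\mu(L_D,\widehat\gamma_1(\tau)L_D;[0,\mathcal T]) = 1$. The main obstacle I anticipate is bookkeeping at the endpoints: the Maslov index conventions (half-integer contributions at $\tau=0$ if $L_D$ is already an intersection, which it is since $\widehat\gamma_1(0)=\Id$, and the behavior as $\tau\to\infty$ where the path accumulates on an eigendirection of the hyperbolic limit without actually reaching an intersection) must be handled with the precise definition recalled in Appendix~\ref{sec:app-Maslov}, and it is exactly this endpoint analysis — combined with verifying the limiting matrix is hyperbolic so that no further crossings occur near infinity — that pins the answer to $1$ rather than $0$ or $2$.
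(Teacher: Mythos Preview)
Your overall strategy is sound and actually matches the paper's in its opening moves: reduce to the two-dimensional system, observe that the $(1,1)$-entry of $\widehat B_1$ equals $1$ so the curve $\tau\mapsto\widehat\gamma_1(\tau)L_D$ is a plus curve, and hence the Maslov index is simply the number of zeros on $[0,\mathcal T]$ of the lower-left entry $c(\tau)$ of $\widehat\gamma_1$. You even mention ``the scalar second-order equation obtained by eliminating one variable'' --- but then you abandon that thread and switch to a Pr\"ufer-angle argument coupled with an asymptotic hyperbolicity analysis of $J\widehat B_1^\infty$. That detour is where the proposal becomes both longer than necessary and incomplete: the hyperbolic limit only tells you the crossing count is \emph{finite}, and your sentence ``a careful phase-plane argument shows that exactly one conjugate instant occurs'' is an assertion, not an argument.

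The paper carries out precisely the step you skipped. Writing $\widehat\gamma_1=\begin{pmatrix}a&h\\ c&k\end{pmatrix}$ and using $v'=\tfrac12 v^2-b$ from \eqref{eq:McGehee2}, one eliminates $a$ and finds
\[
c''(\tau)=\Bigl[\tfrac{3}{16}v(\tau)^2+\tfrac{11}{4}b\Bigr]c(\tau),\qquad c(0)=0,\ c'(0)=1.
\]
Since $b=\U(x_0)>0$, the bracketed coefficient is strictly positive for all $\tau$, so $c$ is convex wherever it is positive; together with $c(0)=0$, $c'(0)=1$ this forces $c(\tau)>0$ for every $\tau>0$. Hence the only crossing is at $\tau=0$, and the plus-curve convention gives $\mu(L_D,\widehat\gamma_1(\tau)L_D;[0,\mathcal T])=1$ for every $\mathcal T>0$. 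No Pr\"ufer angle, no endpoint analysis at infinity, and no appeal to the hyperbolicity of the limit are needed --- the positivity of $\tfrac{3}{16}v^2+\tfrac{11}{4}b$ settles everything in one line.
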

\begin{proof}
We let $\widetilde{\gamma}_{1}(\tau)=R_{1}(\tau)\,\widehat{\gamma}_{1}(\tau)\, R^{-1}_{1}(0)$, then
\begin{equation}
\begin{cases}
\widetilde \gamma'_{1}(\tau)=J\,B_{R_1}(\tau)\,\widetilde \gamma'_{1}(\tau)\\[7pt]
\widetilde \gamma'_{1}(0)=I.
\end{cases}
\end{equation}
By invoking Lemma~\ref{lem:Maslovindex-t-tau}, we immediately get that
\begin{equation}
\mu(\ld ,\widetilde \gamma_{1}(\tau)\ld ;[0,\mathcal{T}])=\mu(\ld , \widehat{\gamma}_{1}(\tau)\ld ;[0,\mathcal{T}]).\nonumber
\end{equation}
We let $\widetilde \gamma_{1}(\tau)=
\begin{pmatrix}
a(\tau) & h(\tau) \\
c(\tau) & k(\tau) \\
\end{pmatrix}
$.
Then since the curve of Lagrangian subspaces $\tau \mapsto \widetilde \gamma_{1}(\tau)\, L_D$ is a plus curve with respect  to $L_D$, the following equality holds
\begin{equation}
\mu(\ld , \widetilde \gamma_{1}(\tau)\ld ;[0,\mathcal{T}])=\sharp\Set{\tau \in [0, \mathcal{T}]|c(\tau)=0}
\end{equation}
where $c$ is a solution of the following IVP
\begin{equation}
\begin{cases}
c''(\tau)=\left[\dfrac{3}{16}v^{2}+\dfrac{11}{4}b\right]\,c(\tau) \\[5pt]
c^{\prime}(0)=1\\[5pt]
c(0)=0.
\end{cases}
\end{equation}
Since $\dfrac{3}{16}v^{2}+\dfrac{11}{4}b>0$ then we get that  $c$  vanishes only at the starting point $[0, \mathcal{T}]$ and so we get that $
\sharp\Set{\tau\in[0,\mathcal{T}]| c(\tau)=0}=1$. By taking into account Equation~\eqref{eq:CLM-index} and by observing that the contribution provided by the starting instant to the Maslov index is given by the coindex of the crossing form, then we get
\begin{equation}
\mu(\ld ,\widehat{\gamma}_{1}(\tau)\ld ;[0,\mathcal{T}])=1.
\end{equation}
The conclusion follows by observing that for ever $\mathcal{T}$ there are no further crossings on $[0, \mathcal{T}]$.
\end{proof}
Now, let $A$ be the matrix defined at Lemma~\ref{lem:eigenvalueUhat} and let $A_d$ the block diagonal matrix defined by  $A_d=\diag(A^{-T},A)$. By a straightforward calculation, we get
\[
\widehat B_{2, A_d}(\tau)= \trasp{A_d}\widehat B_{2}(\tau) A_d =
 \begin{pmatrix}
\Id & \dfrac{1}{4}v\Id\\
\dfrac{1}{4}v\Id&-\trasp{A}\nabla^2_x\U(x_0)A
\end{pmatrix}
\]
For $i=1, \dots, n^*-1$, we denote by  $\lambda_i$  the eigenvalues of
$M^{-1}D^2U_{\mathcal{E}}(s_0)$ and  by Lemma~\ref{lem:eigenvalueUhat}
it follows that  they are eigenvalues of  $\trasp{A}\U_{xx}(x_0) A$ as well. By choosing a diagonalizing basis for $\U_{xx}(x_0)$, then it is possible to decompose the matrix $\widehat B_{2, A_d}(\tau)$ as the symplectic sum of $n^*-1$ matrices; so, we get
\[
\widehat B_{2, A_d}(\tau)=\widetilde{B}_{\lambda_1}(\tau)\diamond\cdots\diamond\widetilde{B}_{\lambda_{n^{*}-1}}(\tau)
\]
where
$$
\widetilde{B}_{\lambda}(\tau)=  \left(\begin{array}{cc}1 & \dfrac{1}{4}v\\
                             \dfrac{1}{4}v&-\lambda\end{array}\right),
$$
here to simplify notation, let $\lmd$ represent any $\lmd_i$, $i =1, \dots, n^*-1$.
By the  symplectic additivity of the Maslov index,  we get
\begin{equation}
\label{eq:MasSumB2}  \mu(\ld ,\widehat{\gamma}_{2}(\tau)\ld ;[0,\mathcal{T}]) = \sum_{i =1}^{n^*-1} \mu(\ld ,\widetilde{\gamma}_{\lambda_i}(\tau)\ld ;[0,\mathcal{T}])
\end{equation}
where $\widetilde{\gamma}_{\lambda_i}$ is the matrix solution of
\begin{equation}
\begin{cases}
\gamma'(\tau)=J\widetilde{B}_{\lambda_i}(\tau)\gamma(\tau)
 \\[7pt]
\gamma(0)=\Id.
\end{cases}
\end{equation}

\begin{lem}\label{prop:Neu}
We assume that the strictly non-spiral condition is fulfilled. Then the following holds:
\begin{equation}
\lim_{\mathcal{T}\rightarrow+\infty}\mu(\ld , \widetilde{\gamma}_{\lambda}(\tau)\ld ;[0,\mathcal{T}])=1
\end{equation}
where we set $\lambda=\lambda_i$.
\end{lem}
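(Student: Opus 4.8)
The plan is to reduce the computation of $\mu(\ld,\widetilde{\gamma}_\lambda(\tau)\ld;[0,\mathcal T])$ to counting zeros of a scalar second order ODE, exactly as in the proof of Lemma~\ref{prop:B1}. Writing $\widetilde{\gamma}_\lambda(\tau)=\begin{pmatrix} a(\tau) & h(\tau)\\ c(\tau) & k(\tau)\end{pmatrix}$, the Hamiltonian system $\gamma'=J\widetilde B_\lambda\gamma$ with $\widetilde B_\lambda=\begin{pmatrix}1 & \tfrac14 v\\ \tfrac14 v & -\lambda\end{pmatrix}$ gives, after eliminating the momentum component, a linear second order equation for $c$ of the form $c''(\tau)=f_\lambda(\tau)\,c(\tau)$ with $c(0)=0$, $c'(0)=1$, where $f_\lambda$ is an explicit function of $v(\tau)$, $v'(\tau)$ and $\lambda$ (one computes it from $\widetilde B_\lambda$ together with $v'=\tfrac12 v^2-b$). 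Since $\tau\mapsto\widetilde\gamma_\lambda(\tau)L_D$ is a plus curve with respect to $L_D$, the Maslov index equals $\sharp\{\tau\in[0,\mathcal T]\mid c(\tau)=0\}$, with the endpoint $\tau=0$ contributing via the coindex of the crossing form, which here is $1$ (as in Lemma~\ref{prop:B1}); so it suffices to show $c$ has no zero in $(0,+\infty)$.

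First I would establish that no interior zero occurs by a positivity/convexity argument: if $f_\lambda(\tau)\ge 0$ for all $\tau$, then $c$ is convex on $[0,\infty)$, starts at $c(0)=0$ with $c'(0)=1>0$, hence $c'$ is nondecreasing, so $c'(\tau)\ge 1$ and $c(\tau)\ge\tau>0$ for $\tau>0$, giving exactly one crossing (at $\tau=0$) and therefore $\mu=1$. So the real content is the sign of the potential $f_\lambda$. Using the energy relation $\tfrac12 v^2-b=rh_0$ and $v'=\tfrac12 v^2-b$, together with Lemma~\ref{lem:masy1} which gives $v^2\to 2b=2U(s_0)$ and $v\le 0$ on $[0,\infty)$, one expresses $f_\lambda$ as a combination of $v^2$ and $\lambda$; the strict non-spiral hypothesis $\lambda_1(s_0)>-\tfrac18 U(s_0)$, i.e. $\lambda>-\tfrac18 b$ for every relevant eigenvalue, is precisely the inequality needed to make the constant part of $f_\lambda$ positive, and the $v$-dependent part should be nonnegative because $v^2$ ranges over a bounded interval with infimum controlled again by $b$. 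The key algebraic identity to verify is that $f_\lambda(\tau)=\tfrac{1}{16}v(\tau)^2+(\lambda+\tfrac18 b)\cdot(\text{positive factor})$ or something of this shape, so that $\lambda>-\tfrac18 b$ forces $f_\lambda>0$ pointwise.

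The main obstacle I anticipate is precisely pinning down this scalar potential $f_\lambda$ and checking its positivity uniformly in $\tau$: the reduction from the $2\times2$ Hamiltonian system to the second order equation for $c$ involves the non-autonomous coupling term $\tfrac14 v$ and its $\tau$-derivative $\tfrac14 v'=\tfrac18 v^2-\tfrac12 b$, so $f_\lambda$ will contain a $v^2$ term, a constant term built from $b$ and $\lambda$, and possibly a cross term; one must show the worst case (which, since $v^2\in[?,2b)$ and $v\le0$, is attained at an endpoint of the range of $v^2$) still yields a nonnegative value exactly when $\lambda\ge-\tfrac18 b$, with strict positivity under the strict inequality. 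Once that sign is secured, the convexity argument closes the lemma immediately, and the borderline role of the constant $-\tfrac18 U(s_0)$ in Definition~\ref{def: BS08 condition} will be transparent. I would also double-check that no crossing escapes to $\tau=+\infty$, but since the Maslov index is computed on each compact $[0,\mathcal T]$ and $c(\tau)\ge\tau$, the limit as $\mathcal T\to+\infty$ is stably equal to $1$.
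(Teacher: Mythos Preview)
Your reduction to the scalar equation for $c$ is correct and matches the paper: one gets
\[
c''(\tau)=\Bigl[\tfrac{3}{16}v(\tau)^2-\tfrac{b}{4}+\lambda\Bigr]c(\tau),\qquad c(0)=0,\quad c'(0)=1,
\]
and via the energy relation $\tfrac12 v^2-b=rh_0$ this is $c''=\bigl[\tfrac{3}{8}rh_0+\tfrac{b}{8}+\lambda\bigr]c$. Your convexity argument then works \emph{verbatim} when $h_0\ge 0$, and this is exactly what the paper does in that case.

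The gap is in the negative energy case. If $h_0<0$ then $\tfrac{3}{8}rh_0<0$, and for $\lambda$ close to the threshold $-b/8$ the bracket $\tfrac{3}{8}rh_0+(\lambda+\tfrac{b}{8})$ is genuinely negative on part (or all) of $[0,\infty)$. So $f_\lambda$ is \emph{not} nonnegative, your hoped-for identity $f_\lambda=\tfrac{1}{16}v^2+(\lambda+\tfrac18 b)\cdot(\text{positive})$ does not hold, and the bare convexity argument for $c$ collapses: $c$ could, a priori, oscillate. The strict non-spiral condition alone does not control the sign of $f_\lambda$ here.

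The paper repairs this by bringing in the companion function $a$. From the first-order system one has $a'=-\tfrac14 va+\lambda c$, $c'=a+\tfrac14 vc$, and differentiating gives
\[
a''=\Bigl[-\tfrac{1}{8}rh_0+\tfrac{b}{8}+\lambda\Bigr]a.
\]
When $h_0<0$ this potential \emph{is} strictly positive under the strict non-spiral condition, so with $a(0)=1$ and $a'(0)=-\tfrac14 v(0)\ge 0$ one gets $a(\tau)>0$ for all $\tau\ge 0$. Now the first-order coupling does the work: if $\tau_1>0$ were the first positive zero of $c$, then $c>0$ on $(0,\tau_1)$ forces $c'(\tau_1)\le 0$, but $c'(\tau_1)=a(\tau_1)+\tfrac14 v(\tau_1)c(\tau_1)=a(\tau_1)>0$, a contradiction. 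Hence $c$ has no zero on $(0,\infty)$ and the Maslov index is $1$, as desired. You should add this second ingredient to close the negative energy case.
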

\begin{proof}
Wlog, we assume that for $t(0)=0$ and we  let
$\widetilde{\gamma}_{\lambda}(\tau)=
\begin{pmatrix}
a(\tau) & h(\tau) \\
c(\tau) & k(\tau)
\end{pmatrix}$,
where  $\tau \mapsto\trasp{(a(\tau), c(\tau))}$ is the solution of the following IVP:
\begin{equation}\label{eq:ode1}
\begin{cases}
a'(\tau)=-\dfrac{1}{4}v(\tau)a(\tau)+\lambda c(\tau)\\
c'(\tau)=a(\tau)+\dfrac{1}{4}v(\tau)c(\tau)\\
a(0)=1\\
c(0)=0.
\end{cases}
\end{equation}
By a direct calculation and by taking into account Equation~\eqref{eq:McGehee2}, we get
\begin{equation}\label{equ:ode2}
\begin{cases}
a''(\tau)=\left[-\dfrac{v(\tau)^{2}}{16}+\dfrac{b}{4}+\lambda\right]a(\tau)\\[7pt]
a'(0)=-\dfrac{1}{4}v(0)\\[7pt]
a(0)=1
\end{cases} \qquad \textrm{ and } \qquad
\begin{cases}
c''(\tau)=\left[\dfrac{3}{16}v(\tau)^{2}-\dfrac{b}{4}+\lambda\right]c(\tau)\\[7pt]
c'(0)=1\\[7pt]
c(0)=0.
\end{cases}
\end{equation}
By using the  energy relation given at Equation~\eqref{eq:energy-relation}, we get
\begin{equation}\label{equ:ode3}
\begin{cases}
a''(\tau)=\left[-\dfrac{rh_{0}}{8}+\dfrac{b}{8}+\lambda\right]a(\tau)\\[7pt]
a'(0)=-\dfrac{1}{4}v(0)\\[7pt]
a(0)=1
\end{cases}
\qquad \textrm{ and } \qquad
\begin{cases}
c''(\tau)=\left[\dfrac{3rh_{0}}{8}+\dfrac{b}{8}+\lambda\right]c(\tau)\\[7pt]
c'(0)=1\\[7pt]
c(0)=0.
\end{cases}
\end{equation}
According to the sign of the the energy level $h_0$, we distinguish two cases:
\begin{itemize}
\item[(i)] {\bf Negative energy.} In this case, since $q \in \mathscr C^2([0, T^+), \Xh)$  collapse at the final instant hence $\tau\mapsto r(\tau)$ is a decreasing function; so, we get $r'(0)\le 0$ and by taking into account of Equation~\eqref{eq:McGehee2}, we get $v(0)\le 0$.  This, in particular,  implies that $a'(0)\ge 0$ in the IVP given at Equation~\eqref{equ:ode3}.

Under the strict non-spiraling condition, we get that  $-\dfrac{rh_{0}}{8}+\dfrac{b}{8}+\lambda>0$. By this we immediately conclude that  $a(\tau)>0$ for every $\tau\in[0, +\infty)$.  On the other hand, always from Equation~\eqref{equ:ode3}, we know that $c(0)=0$ and $c'(0)>0$.  We claim that the function $\tau\mapsto c(\tau)$  has  only one zero in $[0,+\infty)$.

Arguing by contradiction,  we assume that $\tau_{1}>0$ is the first zero of $c(\tau)$ in $(0,+\infty)$.
Then $c(\tau)>0$ in $(0, \tau_{1})$.  and  in particular $c'(\tau_{1})\le 0$. Now, by using   Equation~\eqref{eq:ode1}, we get that  $a(\tau_{1})\le 0$ which is impossible since  $a(\tau)>0$ for every  $\tau\in[0,+\infty)$
\item[(ii)] {\bf  Non-negative energy}. In this case, under the strict non-spiraling condition
\[
\dfrac{3rh_{0}}{8}+\dfrac{b}{8}+\lambda>0.
\]
This in particular  implies that $\tau=0$ is the only zero  of $c(\tau)$ on the interval $[0,+\infty)$.
\end{itemize}
Summing up the previous arguments, we get that under the strict non-spiral condition the only contribution to the Maslov index is given by the starting point. In conclusion, we get
\begin{equation}
\lim_{\mathcal{T}\rightarrow+\infty}\mu(\ld , \widetilde{\gamma}_{\lambda}(\tau)\ld ;[0,\mathcal{T}])=\sharp\{\tau: c(\tau)=0,\tau\in[0,+\infty)\}=1.
\end{equation}
This concludes the proof.
\end{proof}


\section{Proof of Theorem~\ref{thm:2homothetics}}

Let  $q \in \mathscr C^2([0, T^+), \Xh)$ a homothetic orbit colliding at the central configuration $s_0$ at the instant $T^+$ and we assume only the  non-spiral condition holds.\footnote{
It is worth observing that since we are not assuming the strict non-spiral condition, a perturbation argument is needed. }
Given a sufficiently small $\vep >0$, we set
\begin{equation}
\widehat B (\tau,\varepsilon)=\widehat B _{1}(\tau)\diamond\widehat B _{2}(\tau,\varepsilon) \qquad  \textrm{ where } \qquad
\widehat B _{2}(\tau,\varepsilon)=
\begin{pmatrix}
\Mh ^{-1} & \dfrac{1}{4}vI\\
\dfrac{1}{4}vI&-\nabla^2_x\U(x_0)-\varepsilon\Mh.
\end{pmatrix}
\end{equation}
Since by construction $\widehat B (\tau)-\widehat B (\tau,\varepsilon) \ge 0 $ and by using the monotone property of Maslov index given at Equation~\eqref{eq:monotone-property}, we get
\begin{equation}
\mu(\ld ,\widehat{\ga}(\tau)\ld ;[0,\mathcal{T}])\ge
\mu(\ld ,\widehat{\ga}_{\vep}(\tau)\ld ;[0,\mathcal{T}]),\label{96}
\end{equation}
where $\widehat{\ga}_{\vep}(\tau,\tau_{1})$ is the solution of
\begin{equation*}
\begin{cases}
\gamma'(\tau)  =J\widehat B (\tau,\varepsilon)\gamma(\tau) \\
\gamma(0)  =\Id.
\end{cases}
\end{equation*}
We observe that this perturbed system  satisfies the strict non-spiral condition too
and by using Lemma~\ref{prop:B1}, Lemma~\ref{prop:Neu} and the symplectic
decomposition formula provided at Equation~\eqref{Symadd} and Equation~\eqref{eq:MasSumB2}, then we get
\begin{equation}\label{97}
\lim_{\mathcal{T}\rightarrow+\infty}
\mu(\ld ,\widehat{\ga}_{\vep}(\tau)\ld ;[0,\mathcal{T}])=n^{*}.
\end{equation}
We conclude the proof by a contradiction argument. We assume that for $\mathcal{T}$ large enough
\begin{equation}
\mu(\ld ,\widehat{\ga}(\tau)\ld ;[0, \mathcal{T}])\ge n^{*}+1. \nonumber
\end{equation}
By  Theorem~\ref{thm:Mor-Mas}, we get that the $\mu(\ld ,\widehat{\ga}(\tau)\ld ;[0,\mathcal{T}])$
is equal to the Morse index of the solution $q$ which is lower semi-continuous; in  particular doesn't decrease by a small perturbations. So, by choosing  $\varepsilon$ sufficiently small, we get
\begin{equation}
\mu(\ld ,\widehat{\ga}_\varepsilon(\tau)\ld ;[0,\mathcal{T}])\ge n^{*}+1
\end{equation}
which contradicts Equation~\eqref{97}.  So, we get
\begin{equation}
\mu(\ld ,\widehat{\ga}(\tau)\ld ;[0,\mathcal{T}])=n^{*}.
\end{equation}
By these arguments and by using Lemma~\ref{lem:Maslovindex-t-tau} and  Theorem~\ref{thm:Mor-Mas}, we finally get that under the non-spiral condition and for $\mathcal{T}$ sufficiently large
\begin{equation}
m^{-}(q; 0, T^{+})=0.
\end{equation}
This conclude the proof of (b). The proof of the spiral case (a) follows by \cite[Theorem 1.2]{HOY21}. \qed


\appendix

\section{A quick recap of the Maslov index}\label{sec:app-Maslov}

In this section, we briefly recall the definition and the basic properties of the Maslov index. Our basic references are \cite{CLM94} and \cite{RS93} and references therein.

In the {\sc standard symplectic space} $(\R^{2k},\omega)$ a {\sc Lagrangian subspace}
is a linear subspace $L$ such that $L=L^\omega$ where $L^\omega$ denotes the symplectic orthogonal. We denote by $\Lag(k)$ the set of all Lagrangian subspaces and we recall that $\Lag(k)$ is a  compact and connected $k(k+1)/2$-dimensional smooth submanifold   of the Grassmannian of the $k$-dimensional subspaces of $\R^{2k}$.


\subsection{An axiomatic definition}

Given  two continuous paths of Lagrangian subspaces $t\mapsto L_1(t)$ and $t \mapsto L_2(t)$ with  $t\in[a,b]$, it is possible to define a integer-valued homotopy invariant known as {\bf  Maslov index for Lagrangian paths} and denoted by $\mu(L_1,L_2; [a, b])$. This  invariant is uniquely determined by a set of axioms listed as Property I to VII below. (For further details, we refer the interested reader to   \cite{CLM94} and references therein).
\begin{itemize}
\item {\sc Property I. (Reparametrization invariance)}  Let $\vr:[c,d]\rightarrow [a,b]$ be a continuous and piecewise smooth function satisfying $\vr(c)=a$, $\vr(d)=b$, then
 \begin{equation}
\mu(L_1(t), L_2(t); [a,b])=\mu(L_1(\vr(\tau)), L_2(\vr(\tau));[c,d]).
\end{equation}

\item {\sc Property II. (Homotopy invariant with end points)} If two continuous
families of Lagrangian paths $(s,t) \mapsto L_1(s,t)$ and $(s,t)\mapsto L_2(s,t)$,
with $(s,t)\in [0,1] \times [a,b]$ satisfy the following property
\[
\dim\big(L_1(s,a)\cap L_2(s,a)\big)=C_1 \qquad \textrm{ and } \qquad \dim(L_1(s,b)\cap L_2(s,b)) = C_2\qquad \forall \, s \in [0,1]
\]
where $C_1, C_2$ are two positive constants,  then
\begin{equation}
\mu(L_1(0,t), L_2(0,t))=\mu(L_1(1,t),L_2(1,t)).
\end{equation}

\item {\sc Property III. (Path additivity)}  If $a<c<b$, then
\begin{equation}
\mu(L_1(t),L_2(t))=\mu(L_1(t),L_2(t); [a,c])+\mu(L_1(t),L_2(t); [c,b]).
\label{adp1.3}
\end{equation}

\item {\sc Property IV. (Symplectic invariance)} Let $\gamma \in \mathscr C^0([a,b], \Sp(2n))$. Then
\begin{equation}
\mu(L_1(t),L_2(t); [a,b])=\mu(\ga(t)L_1(t), \ga(t)L_2(t); [a,b]).
\end{equation}

\item {\sc Property V. (Symplectic additivity)} For $i=1,2$ let $(W_i, \omega_i)$ be  two symplectic spaces and we assume that  $L_i \in \mathscr C^0([a,b], \Lag(W_1))$ and $L_i \in \mathscr C^0([a,b], \Lag(W_2))$ for $i =1, 2$.  Then
\begin{equation}
\mu(L_1(t)\oplus \widehat{L}_1(t),L_2(t)\oplus \widehat{L}_2(t); [a,b])= \mu(L_1(t),L_2(t))+
\mu(\widehat{L}_1(t),\widehat{L}_2(t); [a,b]).
\end{equation}

\item {\sc Property VI. (Symmetry)} For $i=1,2$ we let  $L_i \in \mathscr C^0([a,b],\Lag(k))$. Then
\begin{equation}  \mu(L_1(t), L_2(t); [a,b])= \dim L_1(a)\cap L_2(a)-\dim L_1(b)\cap L_2(b)  -\mu(L_2(t),L_1(t)).
\end{equation}

\item {\sc Property VII (Monotone property)}   For $j=1,2$ we let   $L_j(t)=\gamma_j(t)\,V$, where
\[
\begin{cases}
\dot \gamma_j(t)=JB_j(t)\gamma_j(t)\\
\ga_j(t)=\Id_{2k}.
\end{cases}
\]
If $B_1(t)\ge B_2(t)$ meaning that the quadratic form associated to $B_1(t)-B_2(t)$ is positive definite, then for any $V_0,V_1\in Lag(2n)$, we get
\begin{equation}\label{eq:monotone-property}
 \mu(V_0, \gamma_1V_1;[a,b])\ge  \mu(V_0, \gamma_2V_1;[a,b]).
       \end{equation}
\end{itemize}


\subsection{An efficient way to compute the Maslov index}

An efficient way to compute  the Maslov index is via the crossing forms introduced by authors in \cite{RS93}.  For simplicity and since it is  enough for our purpose, we only briefly recall this procedure for the computation of the Maslov index for a path of Lagrangian subspaces with respect to a fixed Lagrangian subspace.

Let $\Lambda\in \mathscr C^1([a,b], \Lag(k))$ be a
$\mathscr C^1$-curve of Lagrangian subspaces with $\Lambda(0)=\Lambda$ and
let $V$ be a fixed Lagrangian subspace which is transversal to
$\Lambda$. For $v\in \Lambda$ and for small $t$, we define $w(t)\in V$ by requiring that $v+w(t)\in \Lambda(t)$. Then it is easy to check that
the  quadratic form  $Q$ defined by
\begin{equation}
Q(v)=\left.\dfrac{d}{dt}\right|_{t=0}\omega (v,w(t))
 \end{equation} is
independent of the choice of $V$ (cfr.\cite{RS93} for further details). Now, given a Lagrangian subspace $W$, we say that $t$ is a {\bf crossing instant} of the path $t\mapsto \Lambda(t)$ with $W$ if $\dim\big(\Lambda(t)\cap W) \ge 1$.   At each crossing, we define  the {\bf crossing form} as follows
 \begin{equation}
 \Gamma(\Lambda(t),W,t)=Q|_{\Lambda(t)\cap W}
 \end{equation}
and we say that a crossing is  {\bf  regular} if the crossing
form is non-degenerate.
\begin{rem}
It is worth observing that in the application, usually the Lagrangian path $t\mapsto \Lambda(t)$ is pointwise defined by $\Lambda(t)= \gamma(t)\, L$ where $t \mapsto \gamma(t)$ is a path of symplectic matrices corresponding to the fundamental solution of a Hamiltonian system. In this case the crossing form can be explicitly written in terms of the entries of the path $\gamma$. In fact, if the path is given by $\Lambda(t)=\gamma(t)\,L$ with $\gamma(t)\in \Sp(2k)$ and
$L\in Lag(k)$, then the crossing form is equal to
\begin{equation}\label{eq:utile}
\langle -\gamma(t)^TJ\dot{\gamma}(t)v,v\rangle \qquad \textrm{ for  } v\in
\gamma(t)^{-1}(\Lambda(t)\cap W)
\end{equation}
where $\langle\cdot, \cdot\rangle $ denotes the standard inner product of
$\mathbb{R}^{2k}$. It is immediate to check that the monotone property of the Maslov index given at Equation~\eqref{eq:monotone-property} is a direct consequence of Equation~\eqref{eq:utile}. 
\end{rem}
Assuming that $t\mapsto \Lambda(t)$ is a regular path, meaning that each crossing
with $W$ is regular, authors in \cite{LZ00} proved that  the Maslov index can be computed as follows
\begin{equation}\label{eq:CLM-index}
 \mu(W,\Lambda(t);[a,b])=\comorse(\Gamma(\Lambda(a),W,a))+\sum_{a<t<b}  \sgn
(\Gamma(\Lambda(t),W,t))-\morse (\Gamma(\Lambda(b),W,b)),
\end{equation}
where the sum runs  all over the  crossings $t\in(a,b)$ and where $\comorse,\morse$ denote the dimension of  positive and negative spectral subspaces whilst $\sgn=\comorse-\morse $ denote the signature.
\begin{rem}
We observe that by a sufficient small perturbation a $\mathscr C^1$-path of
Lagrangian subspaces with fixed end points is regular.
\end{rem}
In several interesting cases, for instance in the case of Lagrangian systems satisfying the classical Legendre convexity condition, the curve of Lagrangian subspaces induced by the action of the phase flow of the corresponding Hamiltonian system on a fixed Lagrangian subspace is a {\bf plus curve} with respect to the Dirichlet Lagrangian subspace $L_D$. This means that the local contribution to the Maslov index is provided by the dimension of the intersection of the path  wrt to $L_D$. So, in this specific case, we get that Equation~\eqref{eq:CLM-index} reduces to the following
\begin{equation*}
\mu(\ld ,\Lambda(t))=\text{dim}(\Lambda(a)\cap \ld )+\sum_{a<t<b} \text{dim}(\Lambda(t)\cap \ld ).
\end{equation*}
For more details we refer the interested reader to
\cite{RS93}, \cite{HO16}.

\textbf{Acknowledgments}. The authors thanks Prof. Xijun Hu and Prof. Guowei Yu for
their suggestions and  interesting discussions about  the index theory and the dynamics of  the gravitational $n$-body problem.

\vskip2truecm

\noindent

\begin{flushleft}
Prof. Alessandro Portaluri\\
DISAFA\\
Università degli Studi di Torino\\
Largo Paolo Braccini 2 \\
10095 Grugliasco, Torino\\
Italy\\
\medskip
Visiting professor of Mathematics\\
New York University Abu Dhabi\\
Saadiyat Marina District - Abu Dhabi\\
Emirates\\
Website: \texttt{https://sites.google.com/view/alessandro-portaluri/}\\
E-mail: \texttt{alessandro.portaluri@unito.it}\\
E-mail: \texttt{ap9453@nyu.edu}\\
\end{flushleft}

\vskip1truecm

\begin{flushleft}
Prof. Yuwei Ou \\
Department of mathematics  \\
Shandong University\\
Jinan, 250100, P. R. China \\
E-mail: \texttt{ywou@sdu.edu.cn}
\end{flushleft}

\end{document}